\theoremstyle{definition}
\theoremstyle{remark}
\newtheorem{question}[theorem]{Question}
\numberwithin{equation}{section}
\begin{document}
\title{The Linear Bound for Haar Multiplier Paraproducts}

\author[K. Bickel]{Kelly Bickel}
\address{Kelly Bickel, School of Mathematics\\
Georgia Institute of Technology\\
686 Cherry Street\\
Atlanta, GA USA 30332-0160}
\email{kbickel3@math.gatech.edu}

\author[E. T. Sawyer]{Eric T. Sawyer}
\thanks{Research supported in part by a NSERC Grant.}
\address{Eric T. Sawyer, Department of Mathematics, McMaster University, Hamilton, Canada}
\email{sawyer@mcmaster.ca}

\author[B. D. Wick]{Brett D. Wick$^\ddagger$}
\address{Brett D. Wick, School of Mathematics\\
Georgia Institute of Technology\\
686 Cherry Street\\
Atlanta, GA USA 30332-0160}
\email{wick@math.gatech.edu}
\urladdr{www.math.gatech.edu/~wick}
\thanks{$\ddagger$ Research supported in part by National Science Foundation
DMS grant \# 0955432.}

\begin{abstract}
We study the natural resolution of the conjugated Haar multiplier $T_\sigma$:
\begin{equation*}
M_{w^{\frac{1}{2}}}T_{\sigma}M_{w^{-\frac{1}{2}}}=\left( \mathsf{P}_{\widehat{w^{%
\frac{1}{2}}}}^{(0,1)}+\mathsf{P}_{\widehat{w^{\frac{1}{2}}}}^{(1,0)}+%
\mathsf{P}_{\langle w^{\frac{1}{2}}\rangle }^{(0,0)}\right) T_{\sigma}\left( \mathsf{P%
}_{\widehat{w^{-\frac{1}{2}}}}^{(0,1)}+\mathsf{P}_{\widehat{w^{-\frac{1}{2}}}%
}^{(1,0)}+\mathsf{P}_{\langle w^{-\frac{1}{2}}\rangle }^{(0,0)}\right),
\end{equation*}
where each $M_\varphi$ is decomposed into its canonical paraproduct
decomposition. We prove that each constituent 
operator obtained from this resolution has a linear bound on $L^2(\mathbb{R}^d;w)$ in 
terms of the $A_{2}$ characteristic of $w$.
The main tools used are a \textquotedblleft
product formula\textquotedblright\ for Haar coefficients, the Carleson
Embedding Theorem, the linear bound for the square function, and the 
well-known linear bound of $T_{\sigma}$ on $L^2(w).$
\end{abstract}

\maketitle
\tableofcontents

\section{Introduction and Statement of Main Results}

Let $L^2\equiv L^2\left(\mathbb{R}^d\right)$ denote the space of square
integrable functions over $\mathbb{R}^d$. For a weight $w$, i.e., a positive
locally integrable function on $\mathbb{R}^d$, we set $L^2(w)\equiv L^2(%
\mathbb{R}^d;w)$. In particular, we will be interested in $A_2$ weights,
which are defined by: 
\begin{equation*}
\left[ w\right]_{A_2}\equiv\sup_{I} \left\langle w\right\rangle_I
\left\langle w^{-1}\right\rangle_I,
\end{equation*}
where $\left\langle w\right\rangle_I $ denotes the average of $w$ over a
cube $I$.

An operator $T$ is bounded on $L^{2}(w)$ if and only if $M_{w^{\frac{1}{2}%
}}TM_{w^{-\frac{1}{2}}}$ - the conjugation of $T$ by the multiplication
operators $M_{w^{\pm \frac{1}{2}}}$ - is bounded on $L^{2}$. Moreover, the
operator norms are equal:%
\begin{equation*}
\left\Vert T\right\Vert _{L^{2}(w)\rightarrow L^{2}(w)}=\left\Vert M_{w^{%
\frac{1}{2}}}TM_{w^{-\frac{1}{2}}}\right\Vert _{L^{2}\rightarrow L^{2}}.
\end{equation*}%
In the case that $T$ is a dyadic operator adapted to a dyadic grid $\mathcal{%
D}$, it is natural to study weighted norm properties of $T$ by decomposing
the multiplication operators $M_{w^{\pm \frac{1}{2}}}$ into their canonical
paraproduct decompositions relative to the grid $\mathcal{D}$, i.e. 
\begin{eqnarray*}
M_{w^{\pm \frac{1}{2}}}f &=&\mathsf{P}_{\widehat{w^{\pm \frac{1}{2}}}%
}^{(0,1)}f+\mathsf{P}_{\widehat{w^{\pm \frac{1}{2}}}}^{(1,0)}f+\mathsf{P}%
_{\langle w^{\pm \frac{1}{2}}\rangle }^{(0,0)}f \\
\end{eqnarray*}%
(the paraproduct operators are defined in the next section) and then
decomposing $M_{w^{\frac{1}{2}}}TM_{w^{-\frac{1}{2}}}$ into the nine
canonical individual paraproduct composition operators:%
\begin{eqnarray}
M_{w^{\frac{1}{2}}}TM_{w^{-\frac{1}{2}}} &=&\left( \mathsf{P}_{\widehat{w^{%
\frac{1}{2}}}}^{(0,1)}+\mathsf{P}_{\widehat{w^{\frac{1}{2}}}}^{(1,0)}+%
\mathsf{P}_{\langle w^{\frac{1}{2}}\rangle }^{(0,0)}\right) T\left( \mathsf{P%
}_{\widehat{w^{-\frac{1}{2}}}}^{(0,1)}+\mathsf{P}_{\widehat{w^{-\frac{1}{2}}}%
}^{(1,0)}+\mathsf{P}_{\langle w^{-\frac{1}{2}}\rangle }^{(0,0)}\right)
\label{canoncialpara} \\
&\equiv &Q_{T,w}^{\left( 0,1\right) ,\left( 0,1\right) }+Q_{T,w}^{\left(
0,1\right) ,\left( 1,0\right) }+Q_{T,w}^{\left( 0,1\right) ,\left(
0,0\right) }  \notag \\
&&+Q_{T,w}^{\left( 1,0\right) ,\left( 0,1\right) }+Q_{T,w}^{\left(
1,0\right) ,\left( 1,0\right) }+Q_{T,w}^{\left( 1,0\right) ,\left(
0,0\right) }  \notag \\
&&+Q_{T,w}^{\left( 0,0\right) ,\left( 0,1\right) }+Q_{T,w}^{\left(
0,0\right) ,\left( 1,0\right) }+Q_{T,w}^{\left( 0,0\right) ,\left(
0,0\right) },  \notag
\end{eqnarray}%
where $Q_{T,w}^{\left( \varepsilon _{1},\varepsilon _{2}\right) ,\left(
\varepsilon _{3},\varepsilon _{4}\right) }$ is defined in the obvious way from the expression above.  
If one could show that the operator norms of the $Q_{T,w}^{\left(
\varepsilon _{1},\varepsilon _{2}\right) ,\left( \varepsilon
_{3},\varepsilon _{4}\right) }$ are linear in the $A_{2}$ characteristic,%
\begin{equation*}
\left\Vert Q_{T,w}^{\left( \varepsilon _{1},\varepsilon _{2}\right) ,\left(
\varepsilon _{3},\varepsilon _{4}\right) }\right\Vert _{L^{2}\rightarrow
L^{2}}\lesssim \left[ w\right] _{A_{2}}
\end{equation*}%
it then becomes reasonable to expect that the canonical decomposition of a
dyadic operator $T$ into its paraproduct compositions $Q_{T,w}^{\left(
\varepsilon _{1},\varepsilon _{2}\right) ,\left( \varepsilon
_{3},\varepsilon _{4}\right) }$ will inherit the salient properties of $T$
without losing anything of importance. Of course, these dyadic
paraproduct compositions can be expected to yield to structured dyadic proof
strategies.

This idea has been successfully used to study decompositions of the Hilbert transform in \cite{PRSW}.  
We now extend this idea to the martingale transforms.  Specifically, let $\{\sigma_I\}_{I \in \mathcal{D}}$
denote a sequence of $2^d-1 \times 2^d-1$ diagonal matrices indexed by the dyadic cubes with diagonal entries denoted by
$(\sigma_I)_{\alpha \alpha} \equiv \sigma_{I, \alpha}$ for $\alpha = 1, \dots, 2^d-1.$ Define
\[ 
T_{\sigma} f \equiv \sum_{I \in \mathcal{D}} \left( \sigma_I
 \widehat{f}(I) \right) \cdot  h_I  \qquad \forall f \in L^2,
\]
where $h_I$ 
is the vector of Haar functions adapted to the cube $I$  and $\widehat{f}(I)$
is the vector of Haar coefficients associated to the function $f$. For precise definitions of these Haar objects, see Section \ref{sec:basics} and
 for a precise definition of $T_{\sigma},$ see the beginning of Section \ref{sec:Haar}.
It is well known and simple to see that 
\[
\| T_{\sigma} \|_{L^2 \rightarrow L^2} \le \| \sigma \|_{\infty},
\]
where 
\[ \| \sigma \|_{\infty} \equiv \sup_{I\in\mathcal{D}} \sup_{1 \le \alpha \le 2^d-1} \left| \sigma_{I, \alpha} \right|. \] 
A similar norm bound holds for $L^2(w)$.  
Specifically, in \cite{wit00}, J. Wittwer established the following result in one-dimension and
using related arguments, D. Chung obtained the $d$-dimensional analogue in \cite{C}.

\begin{theorem}[Linear Bound for Martingale Transforms] \label{thm:haarbd}
\[
\| T_{\sigma} \|_{L^2(w) \rightarrow L^2(w)} \lesssim [w]_{A_2} \| \sigma \|_{\infty}.
\] 
\end{theorem}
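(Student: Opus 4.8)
The plan is to reduce the weighted operator bound to a single scalar bilinear inequality and then prove that inequality by a dyadic stopping-time (Bellman-function) argument organized around the $A_2$ condition. By the conjugation identity recorded above, $\|T_\sigma\|_{L^2(w)\to L^2(w)}=\|M_{w^{1/2}}T_\sigma M_{w^{-1/2}}\|_{L^2\to L^2}$, so testing this operator against a pair $(F,G)\in L^2\times L^2$ and substituting $F=fw^{1/2}$, $G=gw^{-1/2}$, it suffices to bound $|\langle T_\sigma f,g\rangle|$ for $f\in L^2(w)$ and $g\in L^2(w^{-1})$ by a constant times $[w]_{A_2}\|\sigma\|_\infty\|f\|_{L^2(w)}\|g\|_{L^2(w^{-1})}$. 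Normalizing $\|\sigma\|_\infty\le1$ and using that each $\sigma_I$ is a diagonal contraction,
\[
|\langle T_\sigma f,g\rangle|=\Bigl|\sum_{I\in\mathcal{D}}\sigma_I\,\widehat f(I)\cdot\widehat g(I)\Bigr|\le\sum_{I\in\mathcal{D}}\sum_{\alpha=1}^{2^{d}-1}\bigl|\widehat f(I)_\alpha\bigr|\,\bigl|\widehat g(I)_\alpha\bigr|,
\]
where $\widehat f,\widehat g$ are the ordinary (Lebesgue) Haar coefficients, so the whole theorem reduces to the weighted estimate for this last double sum. Chung's $d$-dimensional version differs from Wittwer's one-dimensional one only in carrying the finite index $\alpha$ through the argument, which I suppress.

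For the bilinear sum I would \emph{not} simply split it and apply Cauchy--Schwarz: the bound $|\widehat f(I)|\lesssim\langle w^{-1}\rangle_I^{1/2}\,\|\Delta_I^{w}f\|_{L^2(w)}$ (with $\Delta_I^{w}$ the $w$-weighted martingale difference) and its dual do produce a gain of $(\langle w\rangle_I\langle w^{-1}\rangle_I)^{1/2}\le[w]_{A_2}^{1/2}$ per cube, but the remaining square sums run over the \emph{unweighted} martingale differences measured in the $w$-norm, which are not orthogonal, so that route loses control of the constant. Instead I would pass genuinely to the $w$- and $w^{-1}$-weighted Haar systems, for which $\sum_I\|\Delta_I^{w}f\|_{L^2(w)}^{2}=\|f\|_{L^2(w)}^{2}$ exactly, and use the product formula for Haar coefficients, $\int_I\varphi\psi=|I|\langle\varphi\rangle_I\langle\psi\rangle_I+\sum_{J\subseteq I}\widehat\varphi(J)\cdot\widehat\psi(J)$, to rewrite each $\widehat f(I)$ as a diagonal piece proportional to its weighted coefficient on $I$ plus tail pieces encoding the defect between Lebesgue and weighted averages. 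The diagonal pieces are then summed by Cauchy--Schwarz against the weighted square functions, invoking the linear bound for the square function; the tail pieces are reorganized into $A_2$-Carleson sequences — the prototype being built from $\{|I|\langle w\rangle_I\langle w^{-1}\rangle_I\}_{I\in\mathcal{D}}$ and its scale-increments — and summed by the Carleson Embedding Theorem, each application paying exactly one power of $[w]_{A_2}$. This bookkeeping is precisely what the Bellman functions of Wittwer and Chung encode: equivalently, one constructs directly a Bellman function of the local averages of $f$, $g$, $|f|^2w$, $|g|^2w^{-1}$, $w$, $w^{-1}$ on the region cut out by $\langle w\rangle_I\langle w^{-1}\rangle_I\le[w]_{A_2}$, with size $\lesssim[w]_{A_2}\bigl(\langle|f|^2w\rangle_I\langle|g|^2w^{-1}\rangle_I\bigr)^{1/2}$ and a second-difference estimate whose constant is linear in $[w]_{A_2}$, and runs the standard induction on the dyadic grid.

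The main obstacle, and really the whole point of the theorem, is forcing the exponent of $[w]_{A_2}$ to be exactly $1$. Any argument that treats the unweighted Haar system as though it were $L^2(w)$-orthogonal — in particular the crude Cauchy--Schwarz above — overshoots to $[w]_{A_2}^{3/2}$ or $[w]_{A_2}^{2}$, while the standard power-weight examples already exclude any exponent below $1$, so the Carleson/Bellman step has to be tight. Thus the delicate part is to arrange the tail estimate so that the factor $\langle w\rangle_I\langle w^{-1}\rangle_I\le[w]_{A_2}$ is absorbed by a single weighted Carleson-embedding application rather than dissipated by a premature splitting, and dually to check that the size and concavity constants of the Bellman function are genuinely linear in $[w]_{A_2}$. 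That one careful accounting is where the linear bound is obtained.
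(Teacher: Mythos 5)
Your reduction is set up correctly: conjugating by $M_{w^{\pm 1/2}}$ and dualizing, the theorem is equivalent to the bilinear estimate $\sum_{I,\alpha}|\widehat f(I,\alpha)||\widehat g(I,\alpha)|\lesssim [w]_{A_2}\|f\|_{L^2(w)}\|g\|_{L^2(w^{-1})}$, and you are right that a naive per-cube Cauchy--Schwarz loses sharpness. Note, however, that the paper does not prove this statement at all: Theorem \ref{thm:haarbd} is imported from Wittwer \cite{wit00} (one dimension) and Chung \cite{C} (general $d$), so there is no in-paper argument to compare against; the relevant comparison is with those cited proofs, whose strategy (disbalanced Haar decomposition, weighted square-function bound, Carleson embedding / Bellman induction) is exactly what you outline.

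The genuine gap is that your proposal stops at the outline. The entire content of the theorem is the quantitative heart that you assert rather than execute: you say the tail terms "are reorganized into $A_2$-Carleson sequences" and that "this bookkeeping is precisely what the Bellman functions of Wittwer and Chung encode," and you posit a Bellman function of six variables with size $\lesssim [w]_{A_2}(\langle |f|^2 w\rangle_I\langle|g|^2w^{-1}\rangle_I)^{1/2}$ and second-difference constants linear in $[w]_{A_2}$ --- but constructing such a function (or, equivalently, exhibiting the explicit term-by-term decomposition of $\widehat f(I)$ via $h_I^{\alpha}=C_I(w,\alpha)h_I^{w,\alpha}+D_I(w,\alpha)h^1_{E_{\alpha,I}}$ and verifying the specific Carleson conditions, e.g. $\sum_{J\subset I}|\widehat w(J)||\widehat{w^{-1}}(J)|\lesssim [w]_{A_2}|I|$ and the embeddings with weights $\widehat w(J)^2/\langle w\rangle_{E_{\beta,J}}$, together with the linear weighted square-function bound) is precisely where the linear exponent is won, and it is nowhere carried out. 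You also do not verify that, after the splitting, each resulting term pays at most one power of $[w]_{A_2}$ rather than a product of a square-function power and a Carleson power; that accounting is the delicate point you correctly flag but do not resolve. As written, the argument either silently cites Wittwer--Chung (in which case it proves nothing beyond the citation the paper already makes) or is incomplete at its decisive step.
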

Wittwer only established the result for the case where each
$\sigma_I \in \{\pm 1\}$, but the general case follows using the same
arguments. F. Nazarov, S. Treil, and A. Volberg obtained more general
results in \cites{ntv99, ntv08}, where they showed that certain testing conditions are 
sufficient to determine when Haar multipliers and related
operators are bounded from $L^2(w)$ to $L^2(v).$

Here, we study the paraproduct decomposition of $T_{\sigma}$ and establish the 
following result:

\begin{theorem} 
\label{thm:haarmultiplierbound}
let $\{\sigma_I\}_{I \in \mathcal{D}}$
denote a sequence of $2^d-1\times2^d-1$ diagonal matrices indexed by the dyadic cubes and let 
$w$ be an $A_2$ weight.  Then, each paraproduct composition in the canonical 
resolution of $T_\sigma$  can be controlled by a 
linear power of $\left[ w\right]_{A_2}$, i.e. 
\begin{equation*}
\left\Vert Q_{T_\sigma,w}^{\left( \varepsilon _{1},\varepsilon _{2}\right) ,\left(
\varepsilon _{3},\varepsilon _{4}\right) }\right\Vert _{L^{2}\rightarrow
L^{2}}\lesssim \left[ w\right] _{A_{2}}\left\Vert \sigma\right\Vert_{\infty},
\end{equation*}
for each $Q_{T_\sigma,w}^{\left( \varepsilon _{1},\varepsilon _{2}\right) ,\left(
\varepsilon _{3},\varepsilon _{4}\right)}$ in \eqref{canoncialpara}.
\end{theorem}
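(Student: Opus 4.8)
The plan is to estimate each of the nine compositions $Q_{T_\sigma,w}^{(\varepsilon_1,\varepsilon_2),(\varepsilon_3,\varepsilon_4)}=\mathsf{P}_{\widehat{u}}^{(\varepsilon_1,\varepsilon_2)}\,T_\sigma\,\mathsf{P}_{\widehat{v}}^{(\varepsilon_3,\varepsilon_4)}$ directly as operators on unweighted $L^2$, where I write $u=w^{1/2}$, $v=w^{-1/2}$ and read the superscript $(0,0)$ as the $\langle\cdot\rangle$-paraproduct. The organizing principle is that $T_\sigma$ is diagonalized by the Haar system, $\widehat{T_\sigma g}(I)=\sigma_I\widehat g(I)$, so composing with it only inserts the factor $\sigma_I$ at scale $I$; and that the ``dual'' paraproduct $\mathsf{P}^{(0,1)}_{\widehat\varphi}f=\sum_I\langle\varphi\rangle_I\Delta_I f$ (with $\Delta_I f:=\widehat f(I)\cdot h_I$) is itself Haar-diagonal. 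Consequently, whenever $\mathsf{P}^{(0,1)}$ sits on the inside it may be absorbed into $\sigma$, and whenever it sits on the outside the composition reduces to a martingale transform of a paraproduct (or multiplier) applied to $f$. In particular $Q^{(0,1),(0,1)}$ is simply the martingale transform with symbol $\langle u\rangle_I\langle v\rangle_I\sigma_I$, so by the trivial bound and Cauchy--Schwarz $\|Q^{(0,1),(0,1)}\|_{L^2\to L^2}\le\sup_I\langle u\rangle_I\langle v\rangle_I\|\sigma_I\|\le[w]_{A_2}^{1/2}\|\sigma\|_\infty$.

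The next pieces to dispatch are those whose norm squared becomes, after this reduction and after expanding, a single paraproduct-type sum $\|Q f\|_{L^2}^2=\sum_I a_I|\langle f\rangle_I|^2$ (replacing, where needed, the Lebesgue average $\langle f\rangle_I$ by a weighted average $\langle f\rangle_I=\langle w^{\mp1}\rangle_I\,\langle f w^{\pm1}\rangle_I^{w^{\mp1}}$ so as to align with a weighted Carleson embedding), with $a_I$ assembled from $\sigma_I$, $\langle w^{\pm1/2}\rangle_I$ and $\|\widehat{w^{\pm1/2}}(I)\|^2$; the model case here is $Q^{(0,1),(1,0)}f=\sum_I\langle u\rangle_I\sigma_I\langle f\rangle_I\,\widehat v(I)\cdot h_I$. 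The Carleson condition $\|a\|_{\mathrm{Carl}}\lesssim[w]_{A_2}^2\|\sigma\|_\infty^2$ then follows by combining three inputs: the elementary bound $\langle w^{\pm1/2}\rangle_I^2\le\langle w^{\pm1}\rangle_I$; the defining inequality $\langle w\rangle_I\langle w^{-1}\rangle_I\le[w]_{A_2}$; and the $A_\infty$-type Bellman/Carleson estimate $\sum_{I\subseteq J}\|\widehat{w^{\pm1/2}}(I)\|^2/\langle w^{\pm1}\rangle_I\lesssim[w]_{A_2}|J|$. The Carleson Embedding Theorem then delivers the linear bound $\|Q\|_{L^2\to L^2}\lesssim[w]_{A_2}\|\sigma\|_\infty$.

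The heart of the matter, and where I expect the main difficulty, is the remaining ``nested'' compositions — prototypically $Q^{(1,0),(1,0)}=\pi_{u}T_\sigma\pi_{v}$, together with $Q^{(1,0),(0,1)}$, $Q^{(1,0),(0,0)}$, $Q^{(0,0),(1,0)}$, $Q^{(0,0),(0,1)}$, $Q^{(0,1),(0,0)}$ — in which the output of one paraproduct is run through $T_\sigma$ into another and the Haar bookkeeping no longer telescopes. For these I would expand the bilinear form $\langle Q f,g\rangle$ by means of the product formula for Haar coefficients, writing it as one \emph{principal} paraproduct-type sum plus finitely many \emph{lower-order} terms coming from the child-scale cross terms in that formula. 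The principal term is treated exactly as in the previous step, via Cauchy--Schwarz, the $A_2$/$A_\infty$ Carleson lemmas, and the Carleson Embedding Theorem; each lower-order term is of square-function type and is controlled by the linear bound for the dyadic square function on $L^2(w)$ together with the $A_2$ hypothesis (again passing between $L^2$ and $L^2(w)$ via weighted averages). I expect $Q^{(1,0),(1,0)}$ to be the sharpest case: both factors are genuine paraproducts, and neither $w^{1/2}$ nor $w^{-1/2}$ need lie in $\mathrm{BMO}$, so boundedness rests entirely on the cancellation encoded in $w^{1/2}w^{-1/2}=1$ resurfacing through the product formula, and the delicate point will be arranging the constants so that the final exponent of $[w]_{A_2}$ is exactly $1$ rather than $3/2$.

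Finally, $Q^{(0,0),(0,0)}$ requires no extra work. By \eqref{canoncialpara} the operator $M_{w^{1/2}}T_\sigma M_{w^{-1/2}}$ is the sum of all nine pieces, and $\|M_{w^{1/2}}T_\sigma M_{w^{-1/2}}\|_{L^2\to L^2}=\|T_\sigma\|_{L^2(w)\to L^2(w)}\lesssim[w]_{A_2}\|\sigma\|_\infty$ by Theorem~\ref{thm:haarbd}; subtracting the eight pieces already controlled in the previous steps gives $\|Q^{(0,0),(0,0)}\|_{L^2\to L^2}\lesssim[w]_{A_2}\|\sigma\|_\infty$, which completes the proof.
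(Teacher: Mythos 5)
Your overall architecture is the paper's: bound eight of the nine compositions directly by Cauchy--Schwarz, the (modified) Carleson Embedding Theorem with Wittwer/Chung-type Carleson sequences, the product formula for Haar coefficients and the linear square-function bound, and recover the ninth by subtracting the other eight from $M_{w^{1/2}}T_{\sigma}M_{w^{-1/2}}$ and invoking Theorem \ref{thm:haarbd}. (Your labels are internally inconsistent --- you declare $(0,0)$ to be the $\langle\cdot\rangle$-paraproduct yet also write $\mathsf{P}^{(0,1)}_{\widehat{\varphi}}f=\sum_I\langle\varphi\rangle_I\,\widehat f(I)\cdot h_I$ --- but tracking your formulas, your working dictionary is: $(0,1)$ = Haar multiplier with symbol $\langle\varphi\rangle$, $(1,0)$ = the classical paraproduct $\pi_{\varphi}$, $(0,0)$ = its adjoint $\pi^{*}_{\varphi}$.) The genuine gap is \emph{which} term you leave to the subtraction trick. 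You place $Q^{(1,0),(0,0)}=\pi_{w^{1/2}}T_{\sigma}\pi^{*}_{w^{-1/2}}$, i.e.\ $\mathsf{P}^{(0,1)}_{\widehat{w^{1/2}}}T_{\sigma}\mathsf{P}^{(1,0)}_{\widehat{w^{-1/2}}}$ in the paper's notation, among the six ``nested'' terms to be treated directly by ``product formula $+$ Carleson $+$ square function.'' Those tools do not reach this term: the inner paraproduct outputs the non-cancellative functions $h^{1}_{E_{\alpha,I}}$, whose Haar expansions live on the entire tower of cubes above $I$, so after $T_{\sigma}$ acts and one pairs with $\psi$ the bilinear form becomes
\begin{equation*}
\sum_{\gamma,K}\sigma_{K,\gamma}\Bigl(\sum_{\alpha,I}\widehat{w^{-1/2}}(I,\alpha)\,\widehat{\phi}(I,\alpha)\,\widehat{h^{1}_{E_{\alpha,I}}}(K,\gamma)\Bigr)\Bigl(\sum_{\beta,J}\widehat{w^{1/2}}(J,\beta)\,\widehat{\psi}(J,\beta)\,\widehat{h^{1}_{E_{\beta,J}}}(K,\gamma)\Bigr),
\end{equation*}
with $\sigma$ sitting at scales above both $I$ and $J$; nothing collapses to a single-scale Carleson sum, and a direct linear bound for exactly this composition is precisely the open question raised in the paper --- the only known argument is the subtraction one, which you have reserved for a different term.

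The repair is to swap roles: the term you defer, $\pi^{*}_{w^{1/2}}T_{\sigma}\pi^{*}_{w^{-1/2}}$ (the paper's $\mathsf{P}^{(1,0)}_{\widehat{w^{1/2}}}T_{\sigma}\mathsf{P}^{(1,0)}_{\widehat{w^{-1/2}}}$), \emph{can} be bounded directly, by the same scheme as $\pi_{w^{1/2}}T_{\sigma}\pi_{w^{-1/2}}$, while $\pi_{w^{1/2}}T_{\sigma}\pi^{*}_{w^{-1/2}}$ must be the one obtained by subtraction. Even for the terms you do handle directly, two ingredients are missing from your sketch. First, after the product formula produces coefficients like $\widehat{\psi w^{1/2}}(I,\alpha)$, one must expand them in the \emph{disbalanced} Haar systems, $h^{\alpha}_I=C_I(w,\alpha)h^{w,\alpha}_I+D_I(w,\alpha)h^{1}_{E_{\alpha,I}}$ (and likewise for $w^{-1}$), and only then apply the embedding theorem with Chung's estimates such as \eqref{eqn:WitDyadicSum}; a bare ``principal plus lower-order'' split does not yield Carleson sequences with a single power of $[w]_{A_2}$. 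Second, in $d>1$ the whole argument should be run in Wilson's Haar system over the sets $E_{\alpha,I}$, with the modified Carleson Embedding Theorem, since the two-set splitting is what makes the disbalanced decomposition a two-term identity. Minor points: your terms $\pi^{*}_{w^{1/2}}T_{\sigma}\pi_{w^{-1/2}}$ and $\pi^{*}_{w^{1/2}}T_{\sigma}\mathsf{P}^{(0,0)}_{\langle w^{-1/2}\rangle}$ are in fact diagonal and easy, and your asserted Carleson bound $\sum_{I\subseteq J}|\widehat{w^{\pm 1/2}}(I)|^{2}/\langle w^{\pm 1}\rangle_I\lesssim[w]_{A_2}|J|$ is left unproven, whereas the square-function consequences such as \eqref{SquareFunctionEst2} serve the same purpose and are what the paper actually uses.
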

The proof relies heavily on arguments appearing in \cite{PRSW}, especially
the use of a ``product formula" for Haar coefficients, the Carleson
Embedding Theorem, and the linear bound for the square function.  Crucial to the approach taken in this paper is the introduction of Wilson's Haar basis in $\mathbb{R}^n$, \cite{W}, and certain modifications of Chung obtained in \cite{C}.  The approach carried out in \cite{PRSW} is dependent upon the Haar system in $\mathbb{R}$ and the modifications in \cites{C,W} are crucial for our analysis in this note.
To handle the final resolvent paraproduct, we must rely on Theorem \ref{thm:haarbd}. 
Obtaining the bound independent of Theorem \ref{thm:haarbd} is currently an
open question.

\section{Notation and Useful Facts}
\label{sec:basics}

Before proving our main result, we collect necessary notation and estimates.  Throughout this
paper $A \equiv B$ means that the expressions are equal by definition, and $A\lesssim B $ means that there
exists a constant $c_d$, which may depend on the dimension $d$, such that $A\leq c_d B$.

Let $\mathcal{D}$ denote the usual dyadic grid of cubes in $\mathbb{R}
^d$. For $I\in\mathcal{D},$  let $\mathcal{C}_1(I)$
denote the $2^{d}$ children of $I$. Note that each child $J\in\mathcal{C}%
_1(I)$ satisfies $\left\vert J\right\vert=2^{-d}\left\vert I\right\vert$. Further, for $d \in \mathbb{N}$,
set
\begin{equation*}
\Gamma _{d}\equiv \left\{ 0,1\right\} ^{d}\setminus \left\{ \left(
1,\ldots,1\right) \right\} ,
\end{equation*}
and fix an enumeration of this set for the rest of the paper. Elements of this set will be denoted by lowercase greek letters.

\subsection{Wilson's Haar System}

While we would like to use the standard Haar system in the analysis below,
it is more convenient to use an orthonormal system developed by M. Wilson in \cite{W}.  
To construct it, we need the following lemma. It is worth mentioning that property (i) did not 
appear in Wilson's original lemma but was added by D. Chung in \cite{C}.

\begin{lemma}[Wilson, \cite{W}*{Lemma 2} ]
Let $I\in\mathcal{D}$.  Then there are $2^d-1$ pairs of sets $\left\{(E_{\alpha,I}^1,E_{\alpha,I}^2)\right\}_{\alpha\in\Gamma_d}$ such that
\begin{itemize}
\item[(i)] For each $\alpha\in\Gamma_d$, $\left\vert E_{\alpha,I}^1\right\vert=\left\vert E_{\alpha,I}^2\right\vert$;
\item[(ii)] For each $\alpha$ and $s=1,2$, $E_{\alpha,I}^s$ is a non-empty union of cubes from $\mathcal{C}_1(I)$;
\item[(iii)] For each $\alpha$, $ E_{\alpha,I}^1\cap E_{\alpha,I}^2=\emptyset$;
\item[(iv)] For every $\alpha\neq\beta$ one of the following must hold:
\begin{itemize}
\item[(a)] $E_{\alpha,I}^1\cup E_{\alpha,I}^2$ is entirely contained in either $E_{\beta,I}^1$ or $E_{\beta,I}^2$;
\item[(b)] $E_{\beta,I}^1\cup E_{\beta,I}^2$ is entirely contained in either $E_{\alpha,I}^1$ or $E_{\alpha,I}^2$;
\item[(c)] $\left(E_{\beta,I}^1\cup E_{\beta,I}^2\right)\cap \left(E_{\alpha,I}^1\cup E_{\alpha,I}^2\right)=\emptyset$.
\end{itemize} 
\end{itemize}
\end{lemma}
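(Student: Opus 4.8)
The plan is to realize the $2^d$ children in $\mathcal{C}_1(I)$ as the leaves of a perfectly balanced binary tree of depth $d$, and to take the $2^d-1$ required pairs to be the left/right subtree splits at the $2^d-1$ internal nodes of this tree. First I would fix a bijection between $\mathcal{C}_1(I)$ and the binary strings $\{0,1\}^d$; the natural one coming from the dyadic subdivision of $I$ (each coordinate contributing one bit) works fine. The internal nodes of the complete binary tree are then indexed by the proper prefixes, i.e.\ the binary strings $p$ of length $0\le |p|\le d-1$, and there are $\sum_{k=0}^{d-1}2^k=2^d-1$ of them, matching $|\Gamma_d|$.

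For each such prefix $p$ I would set $E^1_{p}$ to be the union of the children whose string begins with $p0$ and $E^2_{p}$ the union of the children whose string begins with $p1$. Relabelling by any fixed bijection from the set of prefixes to $\Gamma_d$ --- for instance sending $s_1\cdots s_k$ to the length-$d$ string obtained by appending a single $0$ and then $1$'s, which hits exactly those elements of $\{0,1\}^d$ having at least one zero --- produces the family $\{(E^1_{\alpha,I},E^2_{\alpha,I})\}_{\alpha\in\Gamma_d}$.

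It then remains to verify (i)--(iv). Properties (ii) and (iii) are immediate: a prefix $p$ of length $k$ has $2^{d-k-1}\ge 1$ children in each of its two halves, so each $E^s_p$ is a nonempty union of children, while $p0$ and $p1$ are incompatible, whence $E^1_p\cap E^2_p=\emptyset$. For (i), every child has measure $2^{-d}|I|$ and the two halves of $p$ contain the same number $2^{d-k-1}$ of children, so $|E^1_p|=|E^2_p|=2^{-k-1}|I|$; this balanced-split property is exactly Chung's addition and is the reason the tree must be perfectly balanced. For (iv), observe that $E^1_\alpha\cup E^2_\alpha$ is precisely the set of children lying below the node $\alpha$, and in a tree any two nodes are either nested or have disjoint leaf-sets: if $\beta$ lies below $\alpha$ then its leaves fall inside one of $E^1_\alpha,E^2_\alpha$ (case (b)), symmetrically for $\alpha$ below $\beta$ (case (a)), and otherwise the two leaf-sets are disjoint (case (c)).

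The construction is clean, so there is no analytic difficulty; the one point requiring care is securing (i) and (iv) simultaneously. A pairing is easily balanced without being laminar, or laminar without being balanced, and it is the demand for both that forces the complete binary tree: at the root the $2^d$ children must split as $2^{d-1}+2^{d-1}$, and recursing forces equal leaf-counts at every node. Confirming that this tree satisfies all four conditions at once, and that its internal-node count is exactly $2^d-1=|\Gamma_d|$ so the indexing by $\Gamma_d$ is a genuine bijection, is the substance of the proof.
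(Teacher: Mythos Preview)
Your construction is correct, and the verification of (i)--(iv) is complete. Note, however, that the paper does not actually prove this lemma: it is stated with a citation to Wilson~\cite{W} (with the remark that Chung added property~(i)), and no argument is given in the paper itself. Your recursive-halving / complete-binary-tree construction is precisely the standard one underlying Wilson's system, so there is nothing substantively different to compare.
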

Set $E_{\alpha,I}\equiv E_{\alpha,I}^1\cup E_{\alpha,I}^2$.  It is important to observe that $\left\vert E_{\alpha,I}\right\vert\approx\left\vert I\right\vert$ for some dimensional constants.
Further, given any $E_{\alpha,I}$ and $E_{\beta, J}$, it follows from the properties of $\mathcal{D}$ and (iv) that one of the following must hold:
$E_{\alpha,I} \subsetneq E_{\beta, J}$, $E_{\beta, J} \subsetneq E_{\alpha,I}$, or $E_{\alpha,I} = E_{\beta, J}$.
Given this collection of sets, we can now introduce Wilson's Haar system for $L^2(w)$.  Fix $\alpha\in\Gamma_d$, $I \in \mathcal{D}$ and define:
\begin{equation}
\label{WilsonHaar}
h_I^{w, \alpha}\equiv\frac{1}{\sqrt{w(E_{\alpha,I})}}\left[\frac{\sqrt{w(E_{\alpha,I}^1)}}{\sqrt{w(E_{\alpha,I}^2)}} \mathsf{1}_{E_{\alpha,I}^2}-\frac{\sqrt{w(E_{\alpha,I}^2)}}{\sqrt{w(E_{\alpha,I}^1)}} \mathsf{1}_{E_{\alpha,I}^1}\right].
\end{equation}
It is easy to show that $\{h_I^{w, \alpha}\}_{\alpha \in \Gamma_d, I\in \mathcal{D}}$ is an orthonormal system in $L^2(w)$.  When the weight $w\equiv 1$, we denote this collection of functions by $\{h_I^{\alpha}\}_{\alpha\in\Gamma_d, I\in\mathcal{D}}$.   Note that each function $h_I^{\alpha}$ has a fixed sign on each child cube $I'\in\mathcal{C}_1(I)$.
Now, for a fixed dyadic cube $J$, set 
\begin{eqnarray*}
\widehat{f}\left( J,\alpha\right) &\equiv & \left\langle f,h_{J}^{\alpha}\right\rangle
_{L^2 }\quad\forall \alpha\in\Gamma_d\ , \\
\widehat{f}\left( J\right) & \equiv &\left( \widehat{f}\left( J,\alpha\right)
\right) _{\alpha\in \Gamma _{d}}\ , \\
h_{J} & \equiv &\left( h_{J}^{\alpha}\right) _{\alpha\in \Gamma _{d}}\ , \\
\bigtriangleup _{J} f & \equiv & \widehat{f}\left( J\right) \cdot h_{J}
=\sum_{\alpha\in \Gamma _{d}}\widehat{f}\left( J,\alpha\right) h_{J}^{\alpha}\ .
\end{eqnarray*}
This means that $\widehat{f}(J)$ is the vector of Haar coefficients of the function $f$,
and $h_J$ is the vector of Haar functions. It is easy to see that the set $\left\{h_I^{\alpha}\right\}_{I\in\mathcal{D}, \alpha\in\Gamma_d}$ is an
orthonormal basis for $L^2$ and so, $f=\sum_{I\in\mathcal{D}} \bigtriangleup_{I} f=\sum_{I\in\mathcal{D}} \widehat{f}(I)\cdot h_I$. This implies
\begin{equation*}
\left\Vert f\right\Vert_{L^2}^2=\sum_{I\in\mathcal{D}}\left\vert \widehat{f}%
(I)\right\vert^2=\sum_{I\in\mathcal{D}}
\sum_{\alpha\in\Gamma_d}\left\vert \widehat{f}(I,\alpha)\right\vert^2 
= \sum_{\alpha\in\Gamma_d}\sum_{I\in\mathcal{D}}\left\vert 
\widehat{f}(I,\alpha)\right\vert^2.
\end{equation*}
Now given a set $E$, define
$h_{E}^{1}\equiv\frac{\mathsf{1}_E}{\left\vert E\right\vert}$, so that the function is $L^1$ normalized.  
We also set $\left\langle f\right\rangle_E \equiv \left\langle f, h_E^{1}\right\rangle_{L^2}$.
The Wilson Haar system has the standard martingale property that the average of $f$ over each $E_{\alpha, I}$ satisfies
\[ \left \langle f \right \rangle_{E_{\alpha, I}} = \sum_{J \in \mathcal{D}} \sum_{\beta \in \Gamma_d} \widehat{f}(J, \beta) \left \langle h^{\beta}_J, h^1_{E_{\alpha, I}} \right \rangle_{L^2} 
= \sum_{J: J \supseteq I} \sum_{\beta: E_{J, \beta} \supsetneq E_{\alpha, I}} 
\widehat{f}(J, \beta) \left \langle h^{\beta}_J, h^1_{E_{\alpha, I}} \right \rangle_{L^2}.
\]
A fundamental tool in our study will be the product formula for Haar expansions in $L^2$. 
A version of this previously appeared in \cite{SSUT}.  
Specifically, given two functions $f$ and $g$ in $L^2,$ we  can expand them with respect to this Haar basis and formally obtain
\begin{eqnarray*}
fg & = & \left(  \sum_{I \in \mathcal{D}} \sum_{\alpha \in \Gamma_d} \widehat{f}(I, \alpha) h_I^{\alpha} \right) \times
\left(  \sum_{J \in \mathcal{D}} \sum_{\beta \in \Gamma_d} \widehat{g}(J, \beta) h_J^{\beta} \right)\\
& = & \sum_{\alpha\in\Gamma_d}\sum_{I\in\mathcal{D}}\left\langle g\right\rangle_{E_{\alpha,I}} \widehat{f}%
(I,\alpha)h_I^{\alpha} + \sum_{\alpha\in\Gamma_d}\sum_{I\in\mathcal{D}}\left\langle f\right\rangle_{E_{\alpha,I}} 
\widehat{g}(I,\alpha)h_I^{\alpha}+\sum_{\alpha\in\Gamma_d}\sum_{I\in\mathcal{D}} \widehat{g}(I,\alpha)\widehat{f}(I,\alpha)h_{E_{\alpha,I}}^1,
\end{eqnarray*}
where we use the fact that 
\[ h^{\alpha}_I h^{\beta}_J = \left \langle h^{\beta}_J, h^1_{E_{\alpha, I}} \right \rangle_{L^2} h^{\alpha}_I \text{ whenever } E_{\alpha, I} \subsetneq E_{\beta, J}. \]
Although the product formula above does not necessarily make sense for arbitrary $f,g \in L^2$, it is well-defined if $f, g$ are finite linear combinations of Haar functions. Moreover, for $J\in\mathcal{D}$ and $\beta\in\Gamma_d$, we have
\begin{equation}
\label{product formula}
\widehat{fg}\left( J,\beta\right) = \sum_{\alpha\in\Gamma_d}\sum_{I\in\mathcal{D}}
\widehat{f}(I,\alpha) \widehat{g}(I,\alpha)  \left\langle h_{E_{\alpha,I}}^1, h_{J}^{\beta}\right\rangle_{L^2}+\widehat{f}\left( J,\beta \right) 
\left\langle g\right\rangle_{E_{\beta,J}}+\widehat{g}\left( J,\beta\right) \left\langle f\right\rangle_{E_{\beta,J}}.
\end{equation}
For finite linear combinations  of Haar functions, this is obtained by simply calculating the Haar coefficient corresponding to $\beta\in\Gamma_d$ and $J\in\mathcal{D}$ in formula for the product $fg$. If $f,g$ are locally in $L^2$, we can approximate them on $E_{\beta,J}$ by finite linear combinations of Haar functions and still deduce (\ref{product formula}). 
Primarily, we will use version (\ref{product formula}) of the product formula. However, it should be noted that support conditions in the first
term actually imply that
\begin{equation*}
\widehat{fg}\left( J,\beta\right) = \sum_{I: I \subseteq J} \sum_{\alpha: E_{\alpha, I} \subsetneq E_{\beta, J}}
\widehat{f}(I,\alpha) \widehat{g}(I,\alpha)  \left\langle h_{E_{\alpha, I}}^1, h_{J}^{\beta}\right\rangle_{L^2}+\widehat{f}\left( J,\beta \right) 
\left\langle g\right\rangle_{E_{\beta,J}}+\widehat{g}\left( J,\beta\right) \left\langle f\right\rangle_{E_{\beta,J}}.
\end{equation*}
Motivated by these product decompositions, we consider the following dyadic operators. They will
be of fundamental importance in this paper.  Give a sequence of numbers $a=\{a_{I,\alpha}\}_{I\in\mathcal{D}, \alpha\in\Gamma_d}$ indexed 
by $I\in\mathcal{D}$ and $\alpha\in\Gamma_d$, 
we define the following paraproduct type operators:
\begin{eqnarray*}
\mathsf{P}_{a}^{(0,0)}f & \equiv & \sum_{\alpha\in\Gamma_d} \sum_{I\in\mathcal{D}}a_{I,\alpha} \, \widehat{f}(I,\alpha) h_I^{\alpha} \\
\mathsf{P}_{a}^{(0,1)}f & \equiv & \sum_{\alpha\in\Gamma_d} \sum_{I\in\mathcal{D}} a_{I,\alpha} \left\langle
f\right\rangle_{E_{\alpha,I}} h_I^{\alpha} \\
\mathsf{P}_{a}^{(1,0)}f & \equiv & \sum_{\alpha\in\Gamma_d}\sum_{I\in\mathcal{D}} a_{I,\alpha} \widehat{f}(I,\alpha) h_{E_{\alpha,I}}^{1}.
\end{eqnarray*}
It is easy to see that the operator $M_g$ of multiplication by $g$ can formally be written as 
\begin{equation} \label{Multiplication} M_g f = \mathsf{P}_{\left \langle g \right \rangle }^{(0,0)}f  + \mathsf{P}_{ \widehat{g} }^{(0,1)}f  + \mathsf{P}_{\widehat{g}}^{(1,0)}f \end{equation}
where $\left \langle g \right \rangle \equiv \{ \left \langle g \right \rangle_{E_{\alpha, I} }\}_{I \in \mathcal{D}, \alpha \in \Gamma_d}$ 
and $\widehat{g} \equiv \{ \widehat{g}(I, \alpha) \}_{I \in \mathcal{D}, \alpha \in \Gamma_d}.$ We will use \eqref{Multiplication} to decompose the operators $M_{w^{\pm\frac{1}{2}}}.$

\subsubsection{Disbalanced Haars}

At points in our later arguments, we will use
disbalanced Haar functions. To do so, we require some additional notation.
Fixing a dyadic cube $J$, a weight $w$ on $\mathbb{R}^d,$
and $\beta \in\Gamma_d,$ we set 
\begin{equation}  \label{disbalanced_defs1}
C_J(w,\beta)\equiv \sqrt{\frac{\left \langle w \right \rangle_{E^1_{\beta,J}} 
\left \langle w \right \rangle_{E^2_{\beta,J}}}{\left \langle w \right \rangle_{E_{\beta,J}}}}
\quad \text{
and }\quad D_J(w, \beta)\equiv\frac{\widehat{w}(J,\beta)}{\left\langle w\right\rangle_{E_{\beta, J}}}.
\end{equation}
Then we have 
\begin{equation}  \label{disbalanced_defs2}
h_J^{\beta}=C_J(w,\beta) h_J^{w,\beta}+D_J(w,\beta)h_{E_{\beta,J}}^{1}
\end{equation}
where $\left\{h_I^{w, \alpha}\right\}_{I\in\mathcal{D},\alpha \in\Gamma_d}$ is the $
L^2(w)$ orthonormal system defined in \eqref{WilsonHaar}. To see this, we use the two equations 
\begin{equation*}
\int_J h_J^{w, \beta}w=0\quad \text{ and }\quad \int_{J}
\left(h_J^{w,\beta}\right)^2w=1
\end{equation*}
to solve for $C_J(w,\beta)$ and $D_J(w,\beta)$. The claimed formula for $%
D_J(w,\beta)$ follows immediately from the condition that $h_J^{w,\beta}$ have
integral zero. Using the second condition and the formula for $D_J(w,\beta),$
one can easily prove that 
\begin{equation*}
C_J(w,\beta)=\sqrt{\frac{\left\langle w\right\rangle_{E_{\beta,J}}^2-\left\vert
E_{\beta,J} \right\vert^{-1}\widehat{w}(J,\beta)^2}{\left\langle w\right\rangle_{E_{\beta,J}}}%
}.
\end{equation*}
Using basic manipulations, and the fact that $|E_{\beta,J}| = 2 |E^j_{\beta,J}|$ for $j=1,2$, we have
\[
\begin{aligned}
\left\langle w\right\rangle_{E_{\beta,J}}^2-\left\vert
E_{\beta,J} \right\vert^{-1}\widehat{w}(J,\beta)^2 & =
\frac{1}{|E_{\beta,J}|^2} \left( \left( w(E^1_{\beta,J}) +  w(E^2_{\beta,J}) \right)^2 -  
\left( w(E^2_{\beta,J}) -  w(E^1_{\beta,J}) \right)^2 \right) \\
& = \frac{4}{|E_{\beta,J}|^2}  w(E^1_{\beta,J}) w(E^2_{\beta,J}) 
= \left \langle w \right \rangle_{E^1_{\beta,J}}  \left \langle w \right \rangle_{E^2_{\beta,J}}. 
\end{aligned}
\]
This gives the desired formula for $C_J(w,\beta)$. A useful observation is
\begin{equation}  \label{C_Kest}
C_J(w,\beta)^2\leq 4 \langle w \rangle_{E_{\beta, J}} \leq 2^{d+1} \left \langle 
w \right \rangle_J,
\end{equation}
which follows since each $\left \langle w \right 
\rangle_{E^j_{\beta,J}} \leq 2 \left \langle w \right \rangle_{E_{\beta,J}}$ and as
$E_{\beta,J}$ contains at least two children of $J$,
 $ \left \langle w \right \rangle_{E_{\beta,J}} \leq 2^{d-1} \left \langle w \right \rangle_{J}.$

\subsubsection{Carleson Embedding Theorem}
A major tool in this paper is the following modification of the standard Carleson 
Embedding Theorem to the sets $\{E_{\alpha, I}\}_{I \in \mathcal{D}, \alpha \in \Gamma_d}.$
It appears in \cite{C}*{Theorem 4.3}:

\begin{theorem}[Modified Carleson Embedding Theorem]
Let $w$ be a weight on $\mathbb{R}^d$ and let $\{a_{\alpha, I} \}_{I \in \mathcal{D}, \alpha \in \Gamma_d}$ be a 
sequence of nonnegative numbers. Then, there is a constant $A > 0$ such that
\[
\frac{1}{|E_{\alpha, I}|} \sum_{J \subset I} \sum_{\beta: E_{\beta, J} \subset E_{\alpha, I}} a_{\beta, J} \left
\langle w \right \rangle^2_{E_{\beta, J}}  \le A \left \langle w \right \rangle_{E_{\alpha, I}} \qquad \forall I \in \mathcal{D}, \ \alpha \in \Gamma_d,
\]
if and only if
\[ \sum_{I \in \mathcal{D}} \sum_{ \alpha \in \Gamma_d} a_{\alpha, I} \left \langle w^{\frac{1}{2}}f \right \rangle^2_{E_{\alpha,I}} \lesssim A \| f\|^2_{L^2} \quad \forall  f \in L^2.
\]
\end{theorem}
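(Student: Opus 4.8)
The plan is to recognize the asserted equivalence as the classical Carleson Embedding Theorem, with Lebesgue measure replaced by $w\,dx$, over the tree of Wilson sets $\{E_{\alpha,I}\}_{I\in\mathcal{D},\,\alpha\in\Gamma_d}$, and then to run the usual principal-set (stopping-time) argument in that setting. The necessity direction is immediate: fix $I\in\mathcal{D}$ and $\alpha\in\Gamma_d$ and apply the embedding to $f=w^{\frac12}\mathsf{1}_{E_{\alpha,I}}\in L^2$. Since $w^{\frac12}f=w\,\mathsf{1}_{E_{\alpha,I}}$, we have $\langle w^{\frac12}f\rangle_{E_{\beta,J}}=\langle w\rangle_{E_{\beta,J}}$ whenever $E_{\beta,J}\subseteq E_{\alpha,I}$ (by property (iv) of the Wilson lemma any two of these sets are nested or disjoint), while $\|f\|_{L^2}^2=w(E_{\alpha,I})$; discarding the nonnegative terms supported outside $E_{\alpha,I}$ and dividing by $|E_{\alpha,I}|$ yields exactly the stated Carleson condition.

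For the substantive direction I would substitute $g:=w^{-\frac12}f$, so that $w^{\frac12}f=wg$, $\|f\|_{L^2}^2=\|g\|_{L^2(w)}^2$, and $\langle w^{\frac12}f\rangle_{E_{\alpha,I}}=\langle w\rangle_{E_{\alpha,I}}\,\langle g\rangle^w_{E_{\alpha,I}}$, where $\langle g\rangle^w_E:=w(E)^{-1}\int_E gw$ denotes the $w$-average. Writing $b_{\alpha,I}:=a_{\alpha,I}\langle w\rangle_{E_{\alpha,I}}^2\ge 0$, the target inequality becomes
\begin{equation*}
\sum_{I\in\mathcal{D}}\sum_{\alpha\in\Gamma_d}b_{\alpha,I}\bigl(\langle g\rangle^w_{E_{\alpha,I}}\bigr)^2\ \lesssim\ A\,\|g\|_{L^2(w)}^2,
\end{equation*}
while the hypothesis says $\sum_{E_{\beta,J}\subseteq E_{\alpha,I}}b_{\beta,J}\le A\,w(E_{\alpha,I})$ for every $(\alpha,I)$; that is, the nonnegative numbers $\{b_{\alpha,I}\}$, viewed as a measure on the tree of Wilson sets, form a $w$-Carleson measure. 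This is precisely the Carleson Embedding Theorem for the underlying measure $w\,dx$.

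To prove that last statement, reduce by monotone convergence to finitely many nonzero $b_{\alpha,I}$ supported inside a single Wilson set $E_0$, and assume $g\ge 0$. Then run the principal-set construction: $E_0$ is principal, and the principal children of a principal set $E$ are the maximal Wilson sets $E'\subsetneq E$ with $\langle g\rangle^w_{E'}>2\langle g\rangle^w_E$; let $\pi(E_{\alpha,I})$ be the minimal principal set containing $E_{\alpha,I}$, so that $\langle g\rangle^w_{E_{\alpha,I}}\le 2\langle g\rangle^w_{\pi(E_{\alpha,I})}$. Grouping by $\pi$ and applying the $w$-Carleson hypothesis on each principal set gives
\begin{equation*}
\sum_{I,\alpha}b_{\alpha,I}\bigl(\langle g\rangle^w_{E_{\alpha,I}}\bigr)^2\ \le\ 4\sum_{E\ \text{principal}}\bigl(\langle g\rangle^w_E\bigr)^2\sum_{\pi(E_{\alpha,I})=E}b_{\alpha,I}\ \le\ 4A\sum_{E\ \text{principal}}\bigl(\langle g\rangle^w_E\bigr)^2 w(E).
\end{equation*}
The stopping rule forces $\sum_{E'}w(E')\le\tfrac12 w(E)$ over the principal children $E'$ of $E$, so the sets $\mathcal{E}(E):=E\setminus\bigcup\{E':E'\ \text{principal child of}\ E\}$ are pairwise disjoint with $w(\mathcal{E}(E))\ge\tfrac12 w(E)$, and on $\mathcal{E}(E)$ the dyadic $w$-maximal function over $\{E_{\alpha,I}\}$ is at least $\langle g\rangle^w_E$; hence $\sum_{E\ \text{principal}}(\langle g\rangle^w_E)^2 w(E)\le 2\|M^w g\|_{L^2(w)}^2\lesssim\|g\|_{L^2(w)}^2$, the final bound being the $L^2(w)$-boundedness of the dyadic maximal operator, which holds for an arbitrary Borel measure by the standard covering-free weak-$(1,1)$ estimate and interpolation.

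The only real obstacle I anticipate is bookkeeping rather than conceptual content. One must check that $\{E_{\alpha,I}\}$ genuinely behaves like a dyadic tree for the stopping time — this is what property (iv) of the Wilson lemma gives, since it makes any two sets nested or disjoint, so that maximal sets with a prescribed property are automatically pairwise disjoint and inequalities like $\int_E gw\ge\sum_{E'}\int_{E'}gw$ over principal children are legitimate; one should also observe that any chain of comparable sets in this tree is finite (because $|E_{\alpha,I}|\approx|I|$ with the $|I|$ dyadic), which guarantees existence of the maximal sets invoked above. One also has to carry the normalization $|E_{\alpha,I}|\approx|I|$ so that the hypothesis really does say $\mu(\{E_{\beta,J}\subseteq E_{\alpha,I}\})\lesssim A\,w(E_{\alpha,I})$. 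Importantly, no step uses doubling of $w$, so the argument applies to all weights, in particular to every $A_2$ weight.
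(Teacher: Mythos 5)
Your argument is correct, but it takes a genuinely different route from the paper: the paper does not prove this theorem at all, it imports it from \cite{C}*{Theorem 4.3}, where the proof is run by the Bellman-function technique. You instead observe that after the substitution $g=w^{-\frac12}f$ (so that $\langle w^{\frac12}f\rangle_{E_{\alpha,I}}=\langle w\rangle_{E_{\alpha,I}}\langle g\rangle^w_{E_{\alpha,I}}$, $\|f\|_{L^2}=\|g\|_{L^2(w)}$, and the hypothesis becomes a packing condition for $b_{\alpha,I}=a_{\alpha,I}\langle w\rangle^2_{E_{\alpha,I}}$ against the measure $w\,dx$), the statement is exactly the classical Carleson embedding theorem for a general measure over the tree of Wilson sets, and you prove that by a stopping-time construction plus the universal $L^2(\mu)$ bound for the tree maximal function. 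What your route buys is an elementary, self-contained proof that uses only the nested-or-disjoint structure from property (iv) of Wilson's lemma and visibly requires no doubling or $A_\infty$ information about $w$; what the Bellman route of \cite{C} buys is explicit constants and uniformity with the other inequalities the paper imports from the same source. Your necessity direction, testing on $f=w^{\frac12}\mathsf{1}_{E_{\alpha,I}}$, is exactly right.

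One step deserves more than the label ``bookkeeping'': the pairwise disjointness of the sets $\mathcal{E}(E)$. That disjointness is equivalent to the assertion that every principal set strictly contained in a principal set $E$ lies inside one of $E$'s principal children, and this does not follow from maximality alone, because a principal set $F\subsetneq E$ may owe its principality to a stopping parent lying strictly above $E$, in which case $\langle g\rangle^w_F>2\langle g\rangle^w_E$ is not automatic. The assertion is in fact true, but it needs a short induction along the finite chains of the tree; alternatively you can bypass it entirely with the level-set version of the same argument: writing $\sum_{I,\alpha}b_{\alpha,I}\bigl(\langle g\rangle^w_{E_{\alpha,I}}\bigr)^2=2\int_0^\infty\lambda\Bigl(\sum_{\langle g\rangle^w_{E_{\alpha,I}}>\lambda}b_{\alpha,I}\Bigr)\,d\lambda$, covering for each $\lambda$ the sets with $w$-average exceeding $\lambda$ by their maximal elements (which are pairwise disjoint by property (iv)), applying the packing hypothesis on each maximal element, and integrating to obtain $A\|M^{w}g\|^2_{L^2(w)}\lesssim A\|g\|^2_{L^2(w)}$; this uses only disjointness of maximal sets, which is immediate. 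With either repair your proof is complete.
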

The proof strategy employed in \cite{C} to deduce this is based on the Bellman technique; however there are other methods by which the interested reader can arrive at the Theorem above.

\subsection{Square Function Estimates}

\subsubsection{Square Function Bound}

Define the dyadic square function $S$ on $L^2$ by 
\[
 Sf(x)^2  = \sum_{I \in \mathcal{D}} | \widehat{f}(I) |^2 h^1_I(x) = \sum_{I \in \mathcal{D}} \sum_{\alpha \in \Gamma_d} | \widehat{f}(I, \alpha) |^2 h^1_I(x) . \]
It is clear from the definition that $\left\Vert Sf \right\Vert_{L^2}=\left\Vert f\right\Vert_{L^2}$.  
Versions of the square function have been studied in the weighted setting $L^2(w)$ 
and it has been shown that a linear bound in terms of the $A_2$ characteristic holds.  
We point the interested readers to \cites{CUMP}.  In the one dimensional case we refer the readers to \cites{wit00,MR1771755}.

For our needs, we require a slightly different formulation and so provide an 
alternate proof of this fact.  Using the arguments from Petermichl and Pott \cite{PetermichlPott}, we prove

\begin{theorem} \label{thm:SquareEst1} Let $w$ be an $A_2$ weight in $\mathbb{R}^d$. Then
\[
\| S f \|_{L^2(w)} \lesssim [w]_{A_2} \| f \|_{L^2(w)} \quad \forall f \in L^2(w). 
\] 
\end{theorem}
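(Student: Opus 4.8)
The plan is to establish the linear bound $\|Sf\|_{L^2(w)} \lesssim [w]_{A_2}\|f\|_{L^2(w)}$ by a duality argument combined with the modified Carleson Embedding Theorem, following Petermichl--Pott. First I would write, by self-adjointness of $S^2$ paired against a test function $g \geq 0$,
\begin{equation*}
\|Sf\|_{L^2(w)}^2 = \int_{\mathbb{R}^d} \left(\sum_{I \in \mathcal{D}} |\widehat{f}(I)|^2 h_I^1\right) w = \sum_{I \in \mathcal{D}} |\widehat{f}(I)|^2 \langle w \rangle_I |I|,
\end{equation*}
and then expand each Haar coefficient $\widehat{f}(I,\alpha) = \langle f, h_I^\alpha\rangle_{L^2}$ by inserting $w^{1/2}\cdot w^{-1/2}$ and using the disbalanced Haar decomposition \eqref{disbalanced_defs2} applied to the weight $w^{-1}$: write $h_I^\alpha = C_I(w^{-1},\alpha)\, h_I^{w^{-1},\alpha} + D_I(w^{-1},\alpha)\, h^1_{E_{\alpha,I}}$, so that $\widehat{f}(I,\alpha)$ splits into a "good" term involving the $L^2(w^{-1})$-orthonormal system and a "paraproduct" term involving averages $\langle f \rangle_{E_{\alpha,I}}$. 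This is exactly the mechanism by which the $A_2$ characteristic enters: $|E_{\alpha,I}| \langle w \rangle_{E_{\alpha,I}}\langle w^{-1}\rangle_{E_{\alpha,I}} \lesssim [w]_{A_2}|I|$.

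Next, for the good term I would use Cauchy--Schwarz in $L^2(w^{-1})$ to bound $|\langle f, h_I^{w^{-1},\alpha}\rangle_{L^2}|$ — more precisely $|\langle w^{1/2}f \cdot w^{-1/2}, h_I^{w^{-1},\alpha}\rangle|$ — noting that after the conjugation the relevant coefficients of $w^{1/2}f$ with respect to the $L^2(w^{-1})$ system are square-summable with sum $\|f\|_{L^2(w)}^2$ by orthonormality; the factor $C_I(w^{-1},\alpha)^2 \lesssim \langle w^{-1}\rangle_{E_{\alpha,I}}$ from \eqref{C_Kest} combines with $\langle w \rangle_I |I|$ to produce a single power of $[w]_{A_2}$. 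For the paraproduct term, the summand becomes roughly $D_I(w^{-1},\alpha)^2 \langle f\rangle_{E_{\alpha,I}}^2 \langle w\rangle_I |I|$; here I would set $a_{\alpha,I} = D_I(w^{-1},\alpha)^2 \langle w \rangle_{E_{\alpha,I}} |E_{\alpha,I}| / |E_{\alpha,I}|$ (appropriately normalized) and verify the Carleson box condition of the Modified Carleson Embedding Theorem — that $\frac{1}{|E_{\alpha,I}|}\sum_{J \subseteq I, E_{\beta,J}\subseteq E_{\alpha,I}} a_{\beta,J}\langle w^{-1}\rangle^2_{E_{\beta,J}} \lesssim [w]_{A_2}\langle w^{-1}\rangle_{E_{\alpha,I}}$ — which reduces to the standard estimate $\sum_{J} \widehat{w^{-1}}(J,\beta)^2/\langle w^{-1}\rangle_{E_{\beta,J}} \lesssim w^{-1}(E_{\alpha,I})$, a weighted Carleson estimate that follows from the $A_\infty$ / John--Nirenberg-type property of $w^{-1}$ (equivalently, the linear square function bound for $w^{-1}$ itself, or a direct Bellman argument). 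Applying the embedding theorem to $f$ replaced by $w^{-1/2}$ times the test object then yields the desired $[w]_{A_2}$-linear control of the paraproduct part.

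The main obstacle I anticipate is bookkeeping the conjugation symmetrically: one must decide whether to expand using the disbalanced system for $w$ or for $w^{-1}$, and the two terms of \eqref{disbalanced_defs2} want opposite choices, so care is needed to make both the "diagonal" term and the "paraproduct" term land on a single power of $[w]_{A_2}$ rather than accidentally a square. The technical heart is verifying the Carleson box condition for the sequence $\{D_I(w^{-1},\alpha)^2\langle w\rangle_{E_{\alpha,I}}\}$ against the weight $w^{-1}$ uniformly in $[w]_{A_2}$; this is where the structural properties of the Wilson sets $E_{\alpha,I}$ (namely $|E_{\alpha,I}|\approx|I|$ and the nesting property (iv)) are used, since they let one telescope $\sum_\beta \widehat{w^{-1}}(J,\beta)^2/\langle w^{-1}\rangle_{E_{\beta,J}}$ over $J\subseteq I$ into $w^{-1}(E_{\alpha,I})$ exactly as in the classical one-dimensional case treated by Petermichl and Pott. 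Once that Carleson estimate is in hand, both pieces are immediate and the proof concludes by summing the two contributions.
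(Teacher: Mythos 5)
Your architecture is viable and draws on the same toolbox as the paper: expand the Haar coefficient via the disbalanced decomposition \eqref{disbalanced_defs2} taken with respect to $w^{-1}$, control the diagonal piece by Bessel's inequality in $L^2(w^{-1})$ together with \eqref{C_Kest} and the $A_2$ condition, and control the paraproduct piece by the modified Carleson Embedding Theorem. The difference from the paper is organizational: the paper first proves the reverse estimate \eqref{eqn:ReverseSquare} with a single power of $[w]_{A_2}$, and only then passes to the upper bound through the operator inequality $D_w\le [w]_{A_2}\left(D_{w^{-1}}\right)^{-1}$, so the second power of $[w]_{A_2}$ enters cleanly at the transfer step; your direct attack must produce both powers inside one computation, which is where your constant bookkeeping goes wrong (see below). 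Two smaller slips: $\int (Sf)^2\,w=\sum_{I}|\widehat f(I)|^2\langle w\rangle_I$, without the extra factor $|I|$; and squaring the two-piece decomposition of $\widehat f(I,\alpha)$ produces a cross term that you do not mention, though it is handled by Cauchy--Schwarz exactly as the paper handles its term $S_2$.

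The genuine gap is the constant in your key Carleson estimate. Your box condition requires control of $\frac{1}{|E_{\alpha,I}|}\sum_{J\subseteq I}\sum_{\beta}\widehat{w^{-1}}(J,\beta)^2\langle w\rangle_{E_{\beta,J}}$, and you reduce it to the claim $\sum_{J\subseteq I}\sum_{\beta}\widehat{w^{-1}}(J,\beta)^2/\langle w^{-1}\rangle_{E_{\beta,J}}\lesssim w^{-1}(E_{\alpha,I})$ with an absolute implied constant. That inequality is false as stated: it is a Buckley-type characterization of $A_\infty$, and the optimal constant is comparable to the $A_\infty$ characteristic of $w^{-1}$, which is not uniformly bounded over $A_2$ weights. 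The correct statement is the paper's \eqref{power1} (Chung, Proposition 4.9, (4.17)), which carries the factor $[w]_{A_2}$. Indeed, your bookkeeping as written would output $\|Sf\|_{L^2(w)}\lesssim [w]_{A_2}^{1/2}\|f\|_{L^2(w)}$, strictly stronger than the theorem (and the linear power is known to be sharp for the dyadic square function at $p=2$), which should have been a warning sign. The fix is routine: insert \eqref{power1} together with $\langle w\rangle_{E_{\beta,J}}\le [w]_{A_2}\langle w^{-1}\rangle_{E_{\beta,J}}^{-1}$, so the box constant becomes $[w]_{A_2}^2$; then the paraproduct part of $\|Sf\|^2_{L^2(w)}$ is $\lesssim [w]_{A_2}^2\|f\|^2_{L^2(w)}$, the diagonal part is $\lesssim [w]_{A_2}\|f\|^2_{L^2(w)}$, and you recover exactly the stated linear bound. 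Note that you cannot instead invoke \eqref{SquareFunctionEst3} here, since that estimate is itself a consequence of the theorem being proved.
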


\begin{proof}
As in \cite{PetermichlPott}, without loss of generality, we can assume $w$ and $w^{-1}$ are bounded
so long as the bounds do not appear in our final estimates. 
We first prove the lower bound:
\begin{equation}
 \| f \|^2_{L^2(w)} \lesssim [w]_{A_2} \| S f \|_{L^2(w)}^2 \qquad \forall f \in L^2(w).  \label{thm:SquareEst2} 
\end{equation}
To this end, define the discrete multiplication operator $D_w: L^2 \rightarrow L^2$ by
\[ 
D_w:  h^{\alpha}_I \mapsto \langle w \rangle_I h^{\alpha}_I  \quad \forall I \in
 \mathcal{D}, \alpha \in \Gamma_d \]
and let $M_w$ denote multiplication by $w$. 
Then, we can rewrite (\ref{thm:SquareEst2}) as: 
\begin{equation} \label{eqn:est1}
\langle M_w f ,f \rangle_{L^2} \lesssim  [ w  ]_{A_2} 
\langle D_w f ,f \rangle_{L^2}, \qquad \forall f \in L^2.
\end{equation}
First, since $w$ and $w^{-1}$
are bounded, $D_w$ and $M_w$ are bounded and invertible with 
$M_w^{-1} = M_{w^{-1}}$ and $D_w^{-1}$ defined by
\[
D^{-1}_w:  h^{\alpha}_I  \mapsto \langle w \rangle_I^{-1} h^{\alpha}_I \quad \forall I \in
 \mathcal{D}, \alpha \in \Gamma_d.\]
As in \cite{PetermichlPott}, one can convert \eqref{eqn:est1} to the equivalent inverse
inequality:
\begin{equation} \label{eqn:est2}
\langle D^{-1}_w f ,f \rangle_{L^2} \lesssim  [ w  ]_{A_2} 
\langle M^{-1}_w f ,f \rangle_{L^2}, \qquad \forall f \in L^2.
\end{equation}
So, we need to establish:
\begin{equation} \label{eqn:ReverseSquare}
\sum_{I \in \mathcal{D}}  \langle w \rangle^{-1}_I 
\left | \widehat{f}(I) \right |^2 \lesssim   [w]_{A_2} \| f \|^2_{L^2(w^{-1})} \quad \forall f \in L^2.
\end{equation}
As in \cite{PetermichlPott}, our first step is to rewrite the sums using disbalanced Haar
functions adapted to $w$ using \eqref{disbalanced_defs1}. To do so, fix a cube $J$ and $\alpha \in\Gamma_d$ and recall that
\begin{equation*}
C_J(w,\alpha)\equiv \sqrt{\frac{\left \langle w \right \rangle_{E^1_{\alpha,J}} 
\left \langle w \right \rangle_{E^2_{\alpha,J}}}{\left \langle w \right \rangle_{E_{\alpha,J}}}}
\quad \text{
and }\quad D_J(w,\alpha)\equiv\frac{\widehat{w}(J,\alpha)}{\left\langle w \right\rangle_{E_{\alpha, J}}}.
\end{equation*}
Then we have 
\begin{equation*}
h_J^{\alpha}=C_J(w,\alpha) h_J^{w,\alpha}+D_J(w,\alpha)h_{E_{\alpha,J}}^1,
\end{equation*}
where $\left\{h_J^{w,\alpha}\right\}_{J\in\mathcal{D},\alpha\in\Gamma_d}$ is the previously-defined $%
L^2(w)$ orthonormal system. 
Returning to the sum in question, we use the disbalanced Haar functions to write:
\[
\begin{aligned}
\sum_{I \in \mathcal{D}}  \langle w \rangle^{-1}_I 
\left | \widehat{f}(I) \right |^2
&= \sum_{I \in \mathcal{D}} \sum_{\alpha \in \Gamma_d} \langle w \rangle^{-1}_I 
 \left \langle f, h^{\alpha}_I \right \rangle_{L^2}^2 \\
& = \sum_{I \in \mathcal{D}} \sum_{\alpha \in \Gamma_d}  C_I(w,\alpha)^2\langle w \rangle^{-1}_I 
\left \langle f, h_I^{w,\alpha} \right \rangle_{L^2}^2 \\
&\ \ + 2\sum_{I \in \mathcal{D}} \sum_{\alpha\in \Gamma_d}  C_I(w,\alpha)D_I(w,\alpha) \langle w \rangle^{-1}_I 
 \left \langle f, h_I^{w,\alpha}  \right \rangle_{L^2} \left \langle f \right \rangle_{E_{\alpha, I}}  \\
& \ \ +
\sum_{I \in \mathcal{D}} \sum_{\alpha \in \Gamma_d} D_I(w,\alpha)^2 \langle w \rangle^{-1}_I 
 \left \langle f  \right \rangle_{E_{\alpha,I}}^2 \\
&=  S_1 + S_2 + S_3.
\end{aligned}
\]
 Since by (\ref{C_Kest}), each $C_I(w,\alpha)^2 \lesssim \left \langle w \right \rangle_I$, 
 we can conclude that each $C_I(w,\alpha)^2\langle w \rangle^{-1}_I  \lesssim 1$. This means
\[ 
S_1 \lesssim \sum_{I \in \mathcal{D}} \sum_{\alpha \in \Gamma_d} \left | \langle f, h^{w,\alpha}_I 
\rangle_{L^2}  \right |^2 = \sum_{I \in \mathcal{D}} \sum_{\alpha \in \Gamma_d}\left | \langle w^{-1} f, h^{w,\alpha}_I  
\rangle_{L^2(w)}  \right |^2  \le \| f \|^2_{L^2(w^{-1})}.
\]
Observe that 
\[ S_2 \lesssim  \left( \sum_{I \in \mathcal{D}} \sum_{\alpha \in \Gamma_d}  C_I(w,\alpha)^2 \langle w \rangle^{-1}_I 
  \left |  \left \langle f, h_I^{w,\alpha}  \right \rangle_{L^2} \right |^2   \right)^{\frac{1}{2}}\left( \sum_{I \in \mathcal{D}} \sum_{\alpha \in \Gamma_d} D_I(w,\alpha)^2 \langle w \rangle^{-1}_I 
\left \langle f \right \rangle_{E_{\alpha,I}}^2  \right)^{\frac{1}{2}}.
\]
The first part of the product is the square root of $S_1$ and the second part
is the square root of $S_3.$ Thus, the proof is reduced to controlling $S_3.$ We use the 
modified Carleson Embedding Theorem. To apply it, we need 
\[ 
\begin{aligned}
 \frac{1}{|E_{\alpha,I}|} \sum_{J \subset I } \sum_{\beta: E_{\beta, J} \subset E_{\alpha, I}} D_J(w,\beta)^2 \langle w \rangle^{-1}_J 
\langle w \rangle^{2}_{E_{\beta,J}}  &= \frac{1}{|E_{\alpha,I}|} \sum_{J \subset I } \sum_{\beta: E_{\beta, J} \subset E_{\alpha, I}} 
\frac{\widehat{w}(J,\beta)^2}{\left\langle w \right\rangle_{E_{\beta, J}}^2} \langle w \rangle^{-1}_J \langle w \rangle^{2}_{E_{\beta,J}} \\
& \lesssim \frac{1}{|E_{\alpha,I}|} \sum_{J \subset I } \sum_{\beta: E_{\beta, J} \subset E_{\alpha, I}} \widehat{w}(J,\beta)^2\langle w \rangle^{-1}_{E_{\beta, J}}\\
& \lesssim   [w]_{A_2} \left \langle w \right \rangle_{E_{\alpha,I}},
\end{aligned}
\]
where the last inequality appears in \cite{C}*{Proposition 4.9, (4.17)}. Then, the modified Carleson Embedding Theorem implies 
\[
\begin{aligned} 
S_3 &= \sum_{I \in \mathcal{D}} \sum_{\alpha \in \Gamma_d} D_I(w,\alpha)^2 \langle w \rangle^{-1}_I 
 \left \langle f w^{\frac{1}{2}} w^{-\frac{1}{2}}  \ \right \rangle_{E_{\alpha,I}}^2
 \lesssim [w]_{A_2} \| f w^{-\frac{1}{2}}  \|^2_{L^2} =  [w]_{A_2} \| f \|^2_{L^2(w^{-1})},
 \end{aligned}
 \]
which proves the lower square function bound. Given (\ref{thm:SquareEst2}) for every $A_2$ weight, 
the upper square function bound follows almost immediately.
Now, for $w$ with $w, w^{-1}$ bounded, the desired inequality is equivalent to 
\[ \langle D_w f , f \rangle_{L^2} \lesssim  [w]^2_{A_2} \langle M_w f,f \rangle_{L^2}, \quad \forall f \in L^2. \]
To prove that, we require the following operator inequality
\[ D_w \le [w]_{A_2} \left( D_{w^{-1}} \right)^{-1}. \]
This is immediate since the trivial inequality $\langle w \rangle_I \le [w]_{A_2} \langle w^{-1} \rangle_I^{-1}$ implies
\[
\langle D_w f ,f \rangle_{L^2} = \sum_{I \in \mathcal{D}}  \langle w \rangle_I  \left | \widehat{f}(I) \right |^2  
 \le  [w]_{A_2} \sum_{I \in \mathcal{D}}  \langle  w^{-1} \rangle_I^{-1}  \left | \widehat{f}(I) \right |^2 
=  [w]_{A_2} \langle  \left( D_{w^{-1}} \right)^{-1} f ,f \rangle_{L^2}.
\]
Combining that estimate with $(\ref{eqn:est2})$ applied to $w^{-1}$ gives:
\[ \langle D_w f ,f \rangle_{L^2} \le [w]_{A_2} \langle  \left( D_{w^{-1}} \right)^{-1} f ,f \rangle_{L^2}
\lesssim  [w]_{A_2}^2 \langle \left(M_{w^{-1}}\right)^{-1} f ,f  \rangle_{L^2} =  [w]_{A_2}^2 \langle M_w f ,f \rangle_{L^2} , \qquad \forall f \in L^2,
\]
which completes the proof. \end{proof}

\subsubsection{Key Estimates Deduced from the Square Function}

As we have shown above, for $w\in A_2$,
\begin{equation*}
\left\Vert Sf \right\Vert_{L^2\left(w\right)}^2\lesssim \left[w\right]%
^2_{A_2}\left\Vert f\right\Vert_{L^2(w)}^2.
\end{equation*}
Applying this inequality to $f=w^{-\frac{1}{2}}\mathbf{1}_{E_{\alpha,I}}$ for $I\in%
\mathcal{D}, \alpha \in \Gamma_d$ and using some trivial estimates yields the following: 
\begin{equation}  \label{SquareFunctionEst}
\sum_{J\subseteq I} \sum_{\beta: E_{\beta, J} \subseteq E_{\alpha, I}} \left\vert \widehat{w^{-\frac{1}{2}}}(J, \beta)\right\vert^2
\left\langle w\right\rangle_J\lesssim \left[w\right]^2_{A_2}\vert
E_{\alpha, I} \vert\quad \forall I\in\mathcal{D}, \alpha \in \Gamma_d.
\end{equation}
A trivial consequence of \eqref{SquareFunctionEst} is the following: 
\begin{equation}  \label{SquareFunctionEst2}
\sum_{J\subseteq I} \sum_{\beta: E_{\beta, J} \subseteq E_{\alpha, I}} \left\vert \widehat{w^{-\frac{1}{2}}}(J, \beta)\right\vert^2
\left\langle w^{\frac{1}{2}}\right\rangle_{E_{\beta,J}}^2\lesssim \left[w\right]
^2_{A_2}\vert E_{\alpha, I} \vert\quad \forall I\in\mathcal{D}, \alpha \in \Gamma_d,
\end{equation}
since $\left\langle w^{\frac{1}{2}}\right\rangle_{E_{\beta,J}}^2 \lesssim \left\langle w^{\frac{1}{2}}\right\rangle_{J}^2\leq \left\langle
w\right\rangle_{J}$. Applying the linear bound of the square function to 
$w^{-1}\mathbf{1}_{E_{\alpha, I}}$, again using trivial estimates, yields
\begin{equation}  \label{SquareFunctionEst3}
\sum_{J\subseteq I} \sum_{\beta: E_{\beta, J} \subseteq E_{\alpha, I}} \left\vert \widehat{w^{-1}}(J, \beta)\right\vert^2
\left\langle w\right\rangle_{E_{\beta, J}}\lesssim \left[w\right]%
^2_{A_2} w^{-1}\left( E_{\alpha, I}\right)\quad \forall I\in\mathcal{D}, \alpha \in \Gamma_d.
\end{equation}
Because of the symmetry of the $A_2$ condition, we additionally have these
estimates with the roles of $w$ and $w^{-1}$ interchanged. These estimates
will all play a fundamental role when applying the modified Carleson Embedding
Theorem.

\section{Linear Bound for Haar Multipliers}
\label{sec:Haar}

We now turn to proving Thereom \ref{thm:haarmultiplierbound}.  
Given a sequence $\sigma=\left\{\sigma_{I,\alpha}\right\}_{I\in\mathcal{D},\alpha\in\Gamma_d}$ we define the Haar multiplier by 
$$
T_\sigma f\equiv\sum_{\alpha\in\Gamma_d} \sum_{I\in\mathcal{D}} \sigma_{I,\alpha} \widehat{f}(I,\alpha) h_I^{\alpha} \qquad \forall f \in L^2.
$$
We must show that 
$$
\left\Vert Q_{T_\sigma,w}^{\left(
\varepsilon _{1},\varepsilon _{2}\right) ,\left( \varepsilon
_{3},\varepsilon _{4}\right) }\right\Vert_{L^2\to L^2}\lesssim \left[ w\right]_{A_2}\left\Vert \sigma\right\Vert_{\infty}
$$
where the operators $Q_{T_\sigma,w}^{\left(
\varepsilon _{1},\varepsilon _{2}\right) ,\left( \varepsilon
_{3},\varepsilon _{4}\right) }$ are defined via \eqref{canoncialpara} and the canonical decomposition of $M_{w^{ \pm \frac{1}{2}}}$ into paraproducts 
is given in \eqref{Multiplication}.

\subsection{Estimating the Easy Terms}

There are four easy terms that arise from \eqref{canoncialpara}. They are
easy because the composition of the paraproducts reduce to classical
paraproduct type operators. The terms are: 
\begin{equation}
\mathsf{P}_{\widehat{w^{\frac{1}{2}}}}^{(1,0)}T_{\sigma}\mathsf{P}_{%
\widehat{w^{-\frac{1}{2}}}}^{(0,1)};  \label{Easy1}
\end{equation}
\begin{equation}  \label{Easy2}
\mathsf{P}_{\widehat{w^{\frac{1}{2}}}}^{(1,0)} T_{\sigma}\mathsf{P}%
_{\langle w^{-\frac{1}{2}}\rangle}^{(0,0)};
\end{equation}
\begin{equation}  \label{Easy3}
\mathsf{P}_{\langle w^{\frac{1}{2}}\rangle}^{(0,0)}T_{\sigma}\mathsf{P}_{%
\widehat{w^{-\frac{1}{2}}}}^{(0,1)};
\end{equation}
\begin{equation}  \label{Easy4}
\mathsf{P}_{\langle w^{\frac{1}{2}}\rangle}^{(0,0)}T_{\sigma}\mathsf{P}%
_{\langle w^{-\frac{1}{2}}\rangle}^{(0,0)}.
\end{equation}
For these terms, we proceed by computing the norm of the operators in
question by using duality. Key to this will be the application of the
modified Carleson Embedding Theorem. 

\subsubsection{Estimating $P^{(1,0)}_{\widehat{w^{\frac{1}{2}}}}
T_{\sigma}P^{(0,1)}_{\widehat{w^{ - \frac{1}{2}}}}$}  

\noindent Fix $\phi,\psi \in L^2$ and observe that 
\begin{eqnarray*}
P^{(1,0)}_{\widehat{w^{\frac{1}{2}}}}T_{\sigma}
P^{(0,1)}_{\widehat{w^{ - \frac{1}{2}}}} \phi & = & 
P^{(1,0)}_{\widehat{w^{\frac{1}{2}}}} \sum_{\alpha\in\Gamma_d}  \sum_{I \in \mathcal{D}}
 \sigma_{I,\alpha} \langle \phi \rangle_{E_{\alpha, I}}  \widehat{w^{- \frac{1}{2}}}(I,\alpha) h_I^{\alpha} \\
& = & \sum_{\alpha\in\Gamma_d} \sum_{I \in \mathcal{D}} \sigma_{I,\alpha} \widehat{w^{\frac{1}{2}}}(I,\alpha)
 \widehat{w^{- \frac{1}{2}}}(I,\alpha) \langle \phi \rangle_{E_{\alpha,I}}  h^{1}_{E_{\alpha,I}}. 
\end{eqnarray*}
Then, we can calculate:
\begin{eqnarray}
\left | \left \langle P^{(1,0)}_{\widehat{w^{\frac{1}{2}}}}
T_{\sigma}P^{(0,1)}_{\widehat{w^{ - \frac{1}{2}}}}\phi,\psi \right \rangle_{L^2} \right |
&=& \left |  \sum_{\alpha\in\Gamma_d}\sum_{I \in \mathcal{D}} \sigma_{I,\alpha} \widehat{w^{\frac{1}{2}}} (I,\alpha) \widehat{w^{- 
\frac{1}{2}}}(I,\alpha) \langle \phi \rangle_{E_{\alpha, I}}  \langle \psi \rangle_{E_{\alpha,I}} \right | \label{P1001}\\
& \le&   \| \sigma \|_{\infty} \left(  \sum_{\alpha\in\Gamma_d} \sum_{I \in \mathcal{D}}  | \widehat{w^{\frac{1}{2}}}(I,\alpha) 
\widehat{w^{- \frac{1}{2}}}(I,\alpha)|  \langle \phi \rangle_{E_{\alpha,I}}^2 \right)^{\frac{1}{2}} \notag\\
 &  & \times  \left(\sum_{\alpha\in\Gamma_d}\sum_{I \in \mathcal{D}} | \widehat{w^{\frac{1}{2}}}(I,\alpha) \widehat{w^{- \frac{1}{2}}}(I,\alpha)|  
\langle \psi \rangle_{E_{\alpha,I}}^2 \right)^{\frac{1}{2}} \notag \\
&\lesssim& \| \sigma \|_{\infty} [ w ]^{\frac{1}{2}}_{A_2} \| \phi \|_{L^2} \| \psi \|_{L^2}\notag,
\end{eqnarray}
where the last inequality follows from the Carleson Embedding Theorem. It applies here, since Cauchy-Schwarz gives:
\begin{eqnarray}
\frac{1}{\left\vert E_{\alpha,I}\right\vert} 
\sum_{J \subset I}\sum_{\beta: E_{\beta,J}\subset E_{\alpha,I}} \left\vert \widehat{w^{\frac{1}{2}}}(I,\beta)  \widehat{w^{- \frac{1}{2}}}(I,\beta)\right\vert & 
\le &   
\frac{1}{\left\vert E_{\alpha,I}\right\vert} \left \| w^{\frac{1}{2}} \textbf{1}_{E_{\alpha,I}} \right\|_{L^2} \left \| w^{-\frac{1}{2}} \textbf{1}_{E_{\alpha,I}} \right \|_{L^2} \notag\\
& = &  \left( \langle w \rangle_{E_{\alpha,I}} \langle w^{-1} \rangle_{E_{\alpha,I}} \right)^{\frac{1}{2}} \notag \\
& \lesssim&  [w]^{\frac{1}{2}}_{A_2}. \label{eqn:estimate1}
\end{eqnarray}
Taking the supremum over all $\phi, \psi \in L^2$ and using duality gives
\[
\left \| P^{(1,0)}_{\widehat{w^{\frac{1}{2}}}}
T_{\sigma}P^{(0,1)}_{\widehat{w^{ - \frac{1}{2}}}} \right \|_{L^2 \rightarrow L^2} 
\lesssim  \| \sigma \|_{\infty} [w]^{\frac{1}{2}}_{A_2}  \le  \| \sigma \|_{\infty} [w]_{A_2}.
\]
\subsubsection{Estimating $P^{(1,0)}_{\widehat{w^{\frac{1}{2}}}}
 T_{\sigma} P^{(0,0)}_{\langle w^{- \frac{1}{2}} \rangle}$ 
and $P^{(0,0)}_{\langle w^{ \frac{1}{2}} \rangle }T_{\sigma}
P^{(0,1)}_{\widehat{w^{- \frac{1}{2}}}}$} 

\noindent As these two operators are symmetric, very similar
arguments can be used to control both of them. 
Thus, we only provide details for the first operator. Observe that 
\[
P^{(1,0)}_{\widehat{w^{\frac{1}{2}}}}T_{\sigma}
P^{(0,0)}_{\langle w^{- \frac{1}{2}} \rangle} \phi = 
P^{(1,0)}_{\widehat{w^{\frac{1}{2}}}} \sum_{\alpha\in\Gamma_d} \sum_{I \in 
\mathcal{D}} \sigma_{I,\alpha} \langle w^{- \frac{1}{2}} \rangle_{E_{\alpha,I}}  \widehat{\phi}(I,\alpha)  h_I^{\alpha} 
= \sum_{\alpha\in\Gamma_d} \sum_{I \in \mathcal{D}} \sigma_{I,\alpha} \langle w^{- \frac{1}{2}} \rangle_{E_{\alpha,I}} \widehat{w^{\frac{1}{2}}}(I,\alpha) \widehat{\phi}(I,\alpha) \,h^{1}_{E_{\alpha,I}}. 
\]
Fixing $\phi,\psi \in L^2$, we can calculate
\begin{eqnarray}
\left\vert \left \langle P^{(1,0)}_{\widehat{w^{\frac{1}{2}}}}T_{\sigma}P^{(0,0)}_{\langle
 w^{- \frac{1}{2}} \rangle} \phi, \psi \right \rangle_{L^2} \right \vert
& = & \left \vert  \sum_{\alpha\in\Gamma_d}\sum_{I \in \mathcal{D}} \sigma_{I,\alpha} \langle 
w^{- \frac{1}{2}} \rangle_{E_{\alpha,I}} \widehat{w^{\frac{1}{2}}}(I,\alpha) \widehat{\phi}(I,\alpha) \langle \psi \rangle_{E_{\alpha, I}} \right \vert \label{P1000orP0001}\\
& \le & \| \sigma \|_{\infty} \| \phi \|_{L^2} \left( \sum_{\alpha\in\Gamma_d}\sum_{I \in \mathcal{D}} 
\left\vert \widehat{w^{\frac{1}{2}}}(I,\alpha)\right\vert^2 \langle w^{- \frac{1}{2}} 
\rangle_{E_{\alpha,I}}^2 \langle \psi \rangle_{E_{\alpha,I}}^2 \right )^{\frac{1}{2}} \notag \\
& \lesssim & \| \sigma \|_{\infty} [w]_{A_2} \|\phi \|_{L^2} \| \psi \|_{L^2},\notag
\end{eqnarray}
where the last inequality follows from the Carleson Embedding
Theorem and estimate \eqref{SquareFunctionEst2}.  Again, taking the 
supremum over all $\phi, \psi \in L^2$ and using duality gives
\[
\left \| P^{(1,0)}_{\widehat{w^{\frac{1}{2}}}}
 T_{\sigma} P^{(0,0)}_{\langle w^{- \frac{1}{2}} \rangle} \right \|_{L^2 \rightarrow L^2} 
\lesssim  \| \sigma \|_{\infty} [w]_{A_2}.
\]

\subsubsection{Estimating $P^{(0,0)}_{\langle w^{ \frac{1}{2}}
 \rangle }T_{\sigma}  P^{(0,0)}_{\langle w^{- \frac{1}{2}} \rangle }$} 

\noindent Fixing $\phi, \psi\in L^2$, observe that  
\begin{eqnarray*}
P^{(0,0)}_{\langle w^{ \frac{1}{2}} \rangle }T_{\sigma}  
P^{(0,0)}_{\langle w^{- \frac{1}{2}} \rangle} \phi 
& = &  P^{(0,0)}_{\langle w^{ \frac{1}{2}} \rangle }\sum_{\alpha\in\Gamma_d} \sum_{I \in 
\mathcal{D}} \sigma_{I,\alpha} \left\langle w^{- \frac{1}{2}} \right\rangle_{E_{\alpha,I}} \widehat{\phi}(I,\alpha) h_I^{\alpha} \\
 & = & \sum_{\alpha\in\Gamma_d}\sum_{I \in \mathcal{D}} \sigma_{I,\alpha} \left\langle w^{ \frac{1}{2}} 
\right\rangle_{E_{\alpha,I}} \left\langle w^{- \frac{1}{2}} \right\rangle_{E_{\alpha,I}} \widehat{\phi}(I,\alpha) h_I^{\alpha}.
\end{eqnarray*}
This means we can calculate
\begin{eqnarray}
\left | \left \langle P^{(0,0)}_{\langle w^{ \frac{1}{2}} \rangle }
T_{\sigma}  P^{(0,0)}_{\langle w^{- \frac{1}{2}} \rangle} \phi , \psi \right \rangle_{L^2} \right | 
& = &  \left | \sum_{\alpha\in\Gamma_d}\sum_{I \in \mathcal{D}} \sigma_{I,\alpha} \left\langle w^{ 
\frac{1}{2}} \right\rangle_{E_{\alpha,I}} \left\langle w^{- \frac{1}{2}} \right\rangle_{E_{\alpha,I}} \widehat{\phi}(I,\alpha) \widehat{\psi}(I,\alpha) \right | \label{P0000}\\
& \lesssim &  \| \sigma \|_{\infty} \sup_{I \in \mathcal{D}} \left( \left\langle 
w^{ \frac{1}{2}} \right\rangle_I \left\langle w^{- \frac{1}{2}} \right\rangle_I \right) \| \phi \|_{L^2} \| \psi \|_{L^2} \notag\\
& \le & \| \sigma \|_{\infty} [ w ]^{\frac{1}{2}}_{A_2} \| \phi \|_{L^2} \| \psi \|_{L^2}. \notag
\end{eqnarray}
Taking the supremum over all $\phi,\psi \in L^2$ and using duality gives the 
desired linear norm bound. This concludes the proof for the easy terms.

\subsection{Estimating the Hard Terms} 

There are five remaining terms to be controlled. These include the four
difficult terms:
\begin{equation}
\mathsf{P}_{\widehat{w^{\frac{1}{2}}}}^{(0,1)}T_{\sigma}\mathsf{P}_{%
\widehat{w^{-\frac{1}{2}}}}^{(0,1)} ;
\label{Difficult1} \end{equation}
\begin{equation}\mathsf{P}_{\widehat{w^{\frac{1}{2}}}}^{(0,1)}T_{\sigma}\mathsf{P}_{\langle
w^{-\frac{1}{2}}\rangle}^{(0,0)};
\label{Difficult2} \end{equation}
\begin{equation}\mathsf{P}_{\widehat{w^{\frac{1}{2}}}}^{(1,0)}T_{\sigma}\mathsf{P}_{%
\widehat{w^{-\frac{1}{2}}}}^{(1,0)} ;
\label{Difficult3} \end{equation}
\begin{equation}\mathsf{P}_{\langle w^{\frac{1}{2}}\rangle}^{(0,0)}T_{\sigma}\mathsf{P}_{%
\widehat{w^{-\frac{1}{2}}}}^{(1,0)}.
\label{Difficult4}
\end{equation}

To estimate terms \eqref{Difficult1} and \eqref{Difficult3} we will rely on
disbalanced Haar functions adapted to the weights $w$ and $w^{-1}$. For these
terms, we also compute the norms using duality and frequent application of the
modified Carleson Embedding Theorem. The proof of the estimates for these
terms is carried out in subsection \ref{subsec:Difficult13}. Terms %
\eqref{Difficult2} and \eqref{Difficult4} will be handled via a similar
method; their analysis appears in subsection \ref{subsec:Difficult24}.

The remaining term is the one for which $T_{\sigma}$ can
not be absorbed into one of the paraproducts. Namely, we need to control the
following expression: 
\begin{equation}
\mathsf{P}_{\widehat{w^{\frac{1}{2}}}}^{(0,1)} T_{\sigma} \mathsf{P}_{%
\widehat{w^{-\frac{1}{2}}}}^{(1,0)}.  \label{VeryDifficult}
\end{equation}%
To handle this term, we must rely on Therem \ref{thm:haarbd} and the 
computed linear bounds for the other eight paraproduct compositions.
This leaves the open the question of whether there is an independent proof
of the linear bound for \eqref{VeryDifficult}. This is discussed further 
in subsection \ref{subsec:VeryDifficult}.

\subsubsection{Estimating $P^{(0,1)}_{\widehat{
w^{ \frac{1}{2}}}}T_{\sigma}P^{(0,1)}_{\widehat{w^{ - \frac{1}{2}}}}$ and
$P^{(1,0)}_{\widehat{w^{\frac{1}{2}}}}T_{\sigma}P^{(1,0)}_{
\widehat{w^{- \frac{1}{2}}}}$}  \label{subsec:Difficult13}
Similar arguments handle both terms and so, we
restrict attention to the first one. Fix $\phi,\psi \in L^2.$ Observe that
 basic manipulations and the product formula \eqref{product formula} 
 for Haar coefficients give
\[
\begin{aligned}
 \left \langle P^{(0,1)}_{\widehat{w^{ \frac{1}{2}}}}T_{\sigma}
 P^{(0,1)}_{\widehat{w^{ - \frac{1}{2}}}}\phi, \psi \right \rangle_{L^2}
&= \sum_{\alpha\in\Gamma_d} \sum_{I\in \mathcal{D}} \sigma_{I,\alpha}   \langle \phi \rangle_{E_{\alpha,I}} \widehat{w^{-\frac{1}{2}}}(I,\alpha) \sum_{\beta\in\Gamma_d}\sum_{J\in\mathcal{D}} \widehat{w^{ \frac{1}{2}}}(J,\beta) \widehat{\psi}(J,\beta) \left \langle h_I^{\alpha}, h^1_{E_{\beta,J}}  \right \rangle_{L^2} \\
& = \sum_{\alpha\in\Gamma_d}\sum_{I \in \mathcal{D}} \sigma_{I,\alpha}   \langle \phi \rangle_{E_{\alpha,I}}  \widehat{w^{-\frac{1}{2}}}(I,\alpha)  \\
& \ \ \ \ \ \times \left( \widehat{\psi w^{\frac{1}{2}}}(I,\alpha) 
- \widehat{\psi}(I,\alpha) \left \langle w^{\frac{1}{2}} \right \rangle_{E_{\alpha,I}} - \widehat{w
^{\frac{1}{2}}}(I,\alpha) \langle \psi \rangle_{E_{\alpha,I}} \right) \\
& \equiv T_1 + T_2 + T_3.
\end{aligned}
\]
We will show that each $|T_j | \lesssim \| \sigma \|_{\infty} [w]_{A_2}
\| \phi \|_{L^2} \| \psi \|_{L^2}.$ 
The bounds for $T_2$ and $T_3$ follow easily. First, observe that
\[
\begin{aligned}
| T_2|  &\le \| \sigma \|_{\infty}  \sum_{\alpha\in\Gamma_d}\sum_{I \in \mathcal{D}}  \left \vert\langle \phi \rangle_{E_{\alpha,I}} \widehat{w^{-\frac{1}{2}}}
(I,\alpha) \widehat{\psi}(I,\alpha) \left \langle w^{\frac{1}{2}} \right \rangle_{E_{\alpha,I}} \right \vert\\
&\le \| \sigma \|_{\infty} \| \psi \|_{L^2} \left( \sum_{\alpha\in\Gamma_d}\sum_{I \in \mathcal{D}} \left\vert \widehat{w^{-\frac{1}{2}}}
(I,\alpha)\right\vert^2  \left \langle w^{\frac{1}{2}} \right \rangle_{E_{\alpha,I}}^2  \langle \phi \rangle_{E_{\alpha,I}}^2 \right)^{\frac{1}{2}} \\
&\lesssim  \| \sigma \|_{\infty} \left[ w \right] _{A_2}  \| \psi \|_{L^2} \|\phi \|_{L^2}.
\end{aligned}
\]
The last inequality follows via the Carleson Embedding Theorem and 
the square function estimate \eqref{SquareFunctionEst2}. For $T_3$, 
the computations are similarly straightforward:
\[
\begin{aligned}
|T_3| & \le \| \sigma \|_{\infty}  \sum_{\alpha\in\Gamma_d}\sum_{I \in \mathcal{D}} \left |\widehat{w^{-\frac{1}{2}}}(I,\alpha) 
\widehat{w^{\frac{1}{2}}}(I,\alpha) \langle \phi \rangle_{E_{\alpha,I}}\langle \psi \rangle_{E_{\alpha,I}}  \right | \\
& \le \| \sigma \|_{\infty} \left(  \sum_{\alpha\in\Gamma_d}\sum_{I \in \mathcal{D}} | \widehat{w^{-\frac{1}{2}}}(I,\alpha) 
\widehat{w^{\frac{1}{2}}}(I,\alpha) |  \langle \phi \rangle_{E_{\alpha,I}}^2 \right)^{\frac{1}{2}} \left(  
 \sum_{\alpha\in\Gamma_d} \sum_{I \in \mathcal{D}} | \widehat{w^{-\frac{1}{2}}}(I,\alpha)  \widehat{w^{\frac{1}{2}}}(I,\alpha) |  
\langle \psi \rangle_{E_{\alpha,I}}^2 \right)^{\frac{1}{2}}  \\
& \lesssim \| \sigma \|_{\infty} [ w]^{\frac{1}{2}}_{A_2} \| \phi\|_{L^2} \|\psi \|_{L^2}.
\end{aligned}
\]
Here, the last inequality follows from two applications of the Carleson 
Embedding Theorem using the estimate given in \eqref{eqn:estimate1}.
Estimating $T_1$ requires the use of disbalanced Haar functions. We
expand the Haar functions in the sum using two disbalanced systems, one associated 
to $w$ and one associated to $w^{-1}$, as follows:
\begin{eqnarray}\label{P0101orP1010}
T_1 &= & \sum_{\alpha\in\Gamma_d}\sum_{I \in \mathcal{D}} \sigma_{I,\alpha} \langle \phi \rangle_{E_{\alpha,I}}  \widehat{w^{-\frac{1}{2}}}(I,\alpha) \widehat{\psi w^{\frac{1}{2}}}(I,\alpha) \\
& = &   \sum_{\alpha\in\Gamma_d} \sum_{I \in \mathcal{D}} \sigma_{I,\alpha} \langle \phi \rangle_{E_{\alpha,I}} \left \langle w^{-\frac{1}{2}},
 C_I(w^{-1},\alpha)h^{w^{-1},\alpha}_I +D_I(w^{-1},\alpha)h^{1}_{E_{\alpha,I}} \right \rangle_{L^2} \notag \\ 
 && \ \ \ \ \ \ \times \ \left \langle \psi
 w^{\frac{1}{2}},  C_I(w,\alpha)h^{w,\alpha}_I +D_I(w,\alpha)h^{1}_{E_{\alpha,I}} \right \rangle_{L^2}\notag \\
& = & \sum_{\alpha\in\Gamma_d} \sum_{I \in \mathcal{D}} \sigma_{I,\alpha} \langle \phi \rangle_{E_{\alpha,I}} C_I(w^{-1},\alpha)C_I(w,\alpha)\left \langle w^{-\frac{1}{2}},
 h^{w^{-1},\alpha}_I \right \rangle_{L^2}\left \langle \psi
 w^{\frac{1}{2}}, h^{w,\alpha}_I \right \rangle_{L^2} \notag \\
& & \ \ + \sum_{\alpha\in\Gamma_d} \sum_{I \in \mathcal{D}} \sigma_{I,\alpha} \langle \phi \rangle_{E_{\alpha,I}} C_I(w^{-1},\alpha)D_I(w,\alpha)\left \langle w^{-\frac{1}{2}},
 h^{w^{-1},\alpha}_I \right \rangle_{L^2} \left \langle \psi w^{\frac{1}{2}} \right \rangle_{E_{\alpha,I}} \notag \\
& & \ \ + \sum_{\alpha\in\Gamma_d}\sum_{I \in \mathcal{D}} \sigma_{I,\alpha} \langle \phi \rangle_{E_{\alpha,I}} D_I(w^{-1},\alpha)C_I(w,\alpha)\left \langle w^{-\frac{1}{2}}
\right \rangle_{E_{\alpha,I}}  \left \langle \psi w^{\frac{1}{2}}, h^{w,\alpha}_I  \right \rangle_{L^2} \notag \\
& & \ \ + \sum_{\alpha\in\Gamma_d}\sum_{I \in \mathcal{D}} \sigma_{I,\alpha} \langle \phi \rangle_{E_{\alpha,I}} D_I(w^{-1},\alpha)D_I(w,\alpha)
\left \langle w^{-\frac{1}{2}} \right \rangle_{E_{\alpha,I}} \left \langle \psi w^{\frac{1}{2}} \right \rangle_{E_{\alpha,I}} \notag \\ 
&\equiv & S_1 + S_2 + S_3 + S_4 \notag.
\end{eqnarray}
Now, we show each $|S_j | \lesssim \| \sigma \|_{\infty} [w]_{A_2}
\| \phi \|_{L^2} \| \psi \|_{L^2},$ which gives the bound for $T_1.$ Observe
that by (\ref{C_Kest}),
\[
\begin{aligned}
|S_1 | & \le \| \sigma \|_{\infty}  \sum_{\alpha\in\Gamma_d} \sum_{I \in \mathcal{D}} \left\vert 
\langle \phi \rangle_{E_{\alpha,I}} C_I(w^{-1},\alpha)C_I(w,\alpha)\left \langle w^{-\frac{1}{2}},
h^{w^{-1},\alpha}_I \right \rangle_{L^2}\left \langle \psi w^{- \frac{1}{2}}, h^{w,\alpha}_I \right \rangle_{L^2(w)} \right | \\
&\lesssim  \| \sigma\|_{\infty} \left\| \psi w^{-\frac{1}{2}}\right \|_{L^2(w)} \left( \sum_{\alpha\in\Gamma_d}\sum_{I \in \mathcal{D}}  \langle w^{-1} \rangle_{I} \langle w \rangle_{I} \left \langle w^{-\frac{1}{2}},
h^{w^{-1},\alpha}_I \right \rangle_{L^2}^2 \langle \phi \rangle_{E_{\alpha,I}}^2 \right)^{\frac{1}{2}} \\
&\lesssim \| \sigma\|_{\infty} [w]^{\frac{1}{2}}_{A_2} \| \psi \|_{L^2} \left( \sum_{\alpha\in\Gamma_d}\sum_{I \in \mathcal{D}} \left \langle 
w^{-\frac{1}{2}}, h^{w^{-1},\alpha}_I \right \rangle_{L^2}^2 \langle \phi \rangle_{E_{\alpha,I}}^2 \right)^{\frac{1}{2}}  \\
&\lesssim \| \sigma\|_{\infty} [w]^{\frac{1}{2}}_{A_2} \| \psi \|_{L^2}  \| \phi\|_{L^2},
\end{aligned}
\]
where the last inequality followed via the Carleson Embedding Theorem using the estimate
\[
\sum_{J \subset I} \sum_{\beta: E_{\beta, J}\subset E_{\alpha,I}} \left \langle w^{-\frac{1}{2}}, h^{w^{-1},\beta}_J \right \rangle^2_{L^2} 
= \sum_{J \subset I} \sum_{\beta: E_{\beta, J}\subset E_{\alpha,I}}  \left \langle w^{\frac{1}{2}}, h^{w^{-1},\beta}_J \right \rangle^2_{L^2(w^{-1})}
\le \left\| w^{\frac{1}{2}} \textbf{1}_{E_{\alpha,I}} \right\|^2_{L^2(w^{-1})} = \left\vert {E_{\alpha,I}}\right\vert. 
\]
The calculation for $S_2$ is also straightforward:
\[
\begin{aligned}
|S_2| &\le \| \sigma \|_{\infty}  \sum_{\alpha\in\Gamma_d} \sum_{I \in \mathcal{D}} \left\vert \left \langle \phi \right\rangle_{E_{\alpha,I}} C_I(w^{-1},\alpha)D_I(w,\alpha)\left \langle w^{-\frac{1}{2}},
 h^{w^{-1},\alpha}_I \right \rangle_{L^2} \left \langle \psi w^{\frac{1}{2}} \right \rangle_{E_{\alpha,I}}  \right \vert\\
 & \lesssim  \| \sigma \|_{\infty} \left( \sum_{\alpha\in\Gamma_d} \sum_{I \in \mathcal{D}}\left \langle w^{-\frac{1}{2}},
 h^{w^{-1},\alpha}_I \right \rangle_{L^2}^2 \langle \phi \rangle_{E_{\alpha,I}}^2 \right)^{\frac{1}{2}}   
 \left( \sum_{\alpha\in\Gamma_d}\sum_{I \in \mathcal{D}} \langle w^{-1} \rangle_{E_{\alpha,I}} \frac{\left\vert \widehat{w}(I,\alpha)\right\vert^2}{\langle w \rangle_{E_{\alpha, I}}^2}\left \langle \psi w^{\frac{1}{2}} \right \rangle_{E_{\alpha,I}}^2  \right)^{\frac{1}{2}} \\
&\lesssim \| \sigma \|_{\infty} [ w]_{A_2} \| \phi \|_{L^2} \| \psi \|_{L^2},
\end{aligned}
\]
where we use the Carleson Embedding Theorem twice. 
The application for the $\phi$ term follows as in the estimate 
for $S_1$, while the application for $\psi $ follows from the square function estimate \eqref{SquareFunctionEst3}. 
Similarly, for $S_3$, we can calculate
\[
\begin{aligned}
\left\vert S_3 \right\vert & \le \| \sigma \|_{\infty} \sum_{\alpha\in\Gamma_d} \sum_{I \in \mathcal{D}} \left | \langle \phi 
\rangle_{E_{\alpha,I}} D_I(w^{-1},\alpha)C_I(w,\alpha)\left \langle w^{-\frac{1}{2}}
\right \rangle_{E_{\alpha,I}}  \left \langle \psi w^{\frac{1}{2}}, h^{w,\alpha}_I  \right \rangle_{L^2} \right |\\
& \lesssim \| \sigma \|_{\infty}  \left( \sum_{\alpha\in\Gamma_d} \sum_{I \in \mathcal{D}} \frac{ \left\vert \widehat{w^{-1}}
(I,\alpha)\right\vert ^2}{\langle w^{-1} \rangle^2_{E_{\alpha,I}}}  \langle \phi \rangle_{E_{\alpha,I}}^2 \right)^{\frac{1}{2}} 
\left( \sum_{\alpha\in\Gamma_d} \sum_{I \in \mathcal{D}}  \langle w \rangle_{I} \left \langle w^{-\frac{1}{2}}
\right \rangle_I^2 \left \langle \psi w^{-\frac{1}{2}}, h^{w,\alpha}_I  \right \rangle_{L^2(w)}^2 \right)^{\frac{1}{2}} \\
& \lesssim  \| \sigma \|_{\infty} [ w]^{\frac{1}{2}}_{A_2}  \left\| \psi w^{-\frac{1}{2}} \right\|_{L^2(w)}  \left(  \sum_{\alpha\in\Gamma_d}\sum_{I \in \mathcal{D}} \frac{ \left\vert \widehat{w^{-1}}
(I,\alpha)\right\vert^2}{\langle w^{-1} \rangle^2_{E_{\alpha,I}}}  \langle \phi \rangle_{E_{\alpha,I}}^2 \right)^{\frac{1}{2}} \\
& \lesssim \| \sigma \|_{\infty} [ w]_{A_2} \| \phi \|_{L^2} \|\psi \|_{L^2},
\end{aligned}
\]
where the last inequality follows from the Carleson Embedding Theorem. 
It applies here since:
\begin{equation}
\label{FKP_Wilson}
\sum_{J\subset I} \sum_{\beta: E_{\beta,J}\subset E_{\alpha,I}} \frac{\left\vert \widehat{w^{-1}}(J,\beta)\right\vert^2}{\left\langle w^{-1}\right\rangle_{E_{\beta,J}}^2}  \lesssim  \left[w\right]_{A_2} \left\vert E_{\alpha, I}\right\vert \quad\forall \alpha\in\Gamma_d\quad\forall I\in\mathcal{D}.
\end{equation}
To see that \eqref{FKP_Wilson} holds, it is then a simple application of Cauchy-Schwarz and the following estimates:
\begin{eqnarray}
\sum_{J\subset I} \sum_{\beta: E_{\beta,J}\subset E_{\alpha,I}} \frac{\left\vert \widehat{w^{-1}}(J,\beta)\right\vert^2}{\left\langle w^{-1}\right\rangle_{E_{\beta,J}}} & \lesssim & \left[w\right]_{A_2} w^{-1}(E_{\alpha, I})\quad\forall \alpha\in\Gamma_d\quad\forall I\in\mathcal{D}; \label{power1}\\
\sum_{J\subset I} \sum_{\beta: E_{\beta,J}\subset E_{\alpha,I}} \frac{\left\vert  \widehat{w^{-1}}(J,\beta)\right\vert^2}{\left\langle w^{-1}\right\rangle_{E_{\beta,J}}^3} & \lesssim & w(E_{\alpha,I})\quad\forall \alpha\in\Gamma_d\quad\forall I\in\mathcal{D}\label{power3}.
\end{eqnarray}
The proofs of \eqref{power1} and \eqref{power3} can be found in \cite{C}*{Proposition 4.9, Equation (4.17)} and \cite{C}*{Proposition 4.7, Equation (4.11)} respectively.  

Lastly, the estimate for $S_4$ is computed as follows:
\[
\begin{aligned}
\left\vert S_4 \right\vert  & \le \| \sigma \|_{\infty}  \sum_{\alpha\in\Gamma_d} \sum_{I \in \mathcal{D}}\left \vert
\langle \phi \rangle_{E_{\alpha,I}} D_I(w^{-1},\alpha)D_I(w,\alpha)\left \langle w^{-\frac{1}{2}}
\right \rangle_{E_{\alpha,I}} \left \langle \psi w^{\frac{1}{2}} \right \rangle_{E_{\alpha,I}} \right \vert \\
& \le \| \sigma \|_{\infty} \left( \sum_{\alpha\in\Gamma_d} \sum_{I \in \mathcal{D}} 
\left\vert \widehat{w}(I,\alpha) \widehat{w^{-1}}(I,\alpha)\right\vert \langle \phi \rangle_{E_{\alpha,I}}^2 \right)^{\frac{1}{2}}
\left( \sum_{\alpha\in\Gamma_d}\sum_{I \in \mathcal{D}} \frac{ \left\vert \widehat{w}(I,\alpha) 
\widehat{w^{-1}}(I,\alpha)\right\vert}{ \langle w \rangle_{E_{\alpha,I}}^2 \langle w^{-1} \rangle_{E_{\alpha,I}} }\left \langle \psi w^{\frac{1}{2}} \right \rangle_{E_{\alpha,I}}^2 \right)^{\frac{1}{2}} \\
&= \| \sigma \|_{\infty} [w]_{A_2} \| \phi \|_{L_2} \|\psi \|_{L^2}, 
\end{aligned}
\]
where the Carleson Embedding Theorem is used twice. The application for 
the $\phi$ term uses 
\begin{equation}
 \label{eqn:WitDyadicSum}
\sum_{J\subset I} \sum_{\beta: E_{\beta, J}\subset E_{\alpha,I}} \left | \widehat{w}(J,\beta)\, \widehat{w^{-1}}(J,\beta) \right | \lesssim \left[w\right]_{A_2}\left\vert E_{\alpha, I}\right\vert\quad\forall\alpha\in\Gamma_d\quad\forall  I\in\mathcal{D}.
\end{equation}
As stated, the proof of this is found in \cite{C}*{Equation (6.3)}.  The one-dimensional version is established in \cite{wit00}*{Lemma 4.7}.  The application for the $\psi$ term uses 
\begin{equation}
\sum_{J\subset I} \sum_{\beta: E_{\beta, J}\subset E_{\alpha,I}} \frac{ \left | \widehat{w}(J,\beta)\, \widehat{w^{-1}}(J,\beta) \right|}{\left\langle w^{-1}\right\rangle_{E_{\beta,J}}}\lesssim \left[w\right]_{A_2} w\left(E_{\alpha, I}\right) \quad\forall\alpha\in\Gamma_d\quad\forall  I\in\mathcal{D}.
\end{equation}
As stated, the proof of this is found in \cite{C}*{Equation (6.4)}.

This concludes the proof of the estimates for $T_1, T_2, T_3.$ By taking the supremum over $\phi,\psi \in L^2$ and using  duality, we conclude that
\[ 
\left \Vert P^{(0,1)}_{\widehat{
w^{ \frac{1}{2}}}}T_{\sigma}P^{(0,1)}_{\widehat{w^{ - \frac{1}{2}}}} 
\right \Vert_{L^2\to L^2} \lesssim  \| \sigma \|_{\infty} [w]_{A_2}.
\]
\subsubsection{Estimating $P^{(0,1)}_{\widehat{w^{ \frac{1}{2}}}}
T_{\sigma} P^{(0,0)}_{\langle w^{- \frac{1}{2}} \rangle}$ 
and $P^{(0,0)}_{\langle w^{ \frac{1}{2}} \rangle }T_{\sigma}
P^{(1,0)}_{\widehat{w^{ - \frac{1}{2}}}}$ } \label{subsec:Difficult24}

We only discuss the first operator, as the estimates for the 
second one follow via similar arguments. Fix $\phi,\psi \in L^2$. We first
simplify using basic manipulations and the product formula for Haar coefficients, \eqref{product formula}, as follows:
\[
\begin{aligned}
\left \langle P^{(0,1)}_{\widehat{w^{ \frac{1}{2}}}}T_{\sigma} P^{(0,0)}_{
\langle w^{- \frac{1}{2}} \rangle}\phi,\psi \right \rangle_{L^2}
& =  \sum_{\alpha\in\Gamma_d}\sum_{I\in \mathcal{D}} \sigma_{I,\alpha} \left \langle w^{-\frac{1}{2}} \right \rangle_{E_{\alpha,I}} \widehat{\phi}(I,\alpha) \sum_{\beta\in\Gamma_d}\sum_{J\in\mathcal{D}} \widehat{w^{\frac{1}{2}}}(J,\beta) \widehat{\psi}(J,\beta) \left \langle h_I^{\alpha}, h^1_{E_{\beta,J}} \right \rangle_{L^2}  \\
& = \sum_{\alpha\in\Gamma_d}\sum_{I \in \mathcal{D}}  \sigma_{I,\alpha} \left \langle w^{-\frac{1}{2}} \right \rangle_{E_{\alpha,I}} \widehat{\phi}(I,\alpha) \\
 & \ \ \ \  \times \left( \widehat{\psi w^{\frac{1}{2}}}(I,\alpha) - \widehat{\psi}(I,\alpha)\left \langle w^{\frac{1}{2}} \right \rangle_{E_{\alpha,I}}
- \widehat{w^{\frac{1}{2}}}(I, \alpha)\langle \psi \rangle_{E_{\alpha,I}}  \right)\\
& \equiv T_1 +T_2 +T_3.
\end{aligned}
\]
As in the previous case, we show that each $\left\vert T_j\right\vert \lesssim \| 
\sigma \|_{\infty} [w]_{A_2} \|\phi \|_{L^2} \|\psi \|_{L^2}.$
The estimates for $T_2$ and $T_3$ follow easily. Observe that
\[
\begin{aligned}
 |T_2| &\le \| \sigma \|_{\infty}  \sum_{\alpha\in\Gamma_d}\sum_{I \in \mathcal{D}} 
\left | \left \langle w^{-\frac{1}{2}} \right \rangle_{E_{\alpha,I}} \left \langle w^{\frac{1}{2}} \right \rangle_{E_{\alpha,I}}
 \widehat{\phi}(I,\alpha)  \widehat{\psi}(I,\alpha)\right | \\
 &\lesssim  \| \sigma \|_{\infty}  [ w]^{\frac{1}{2}}_{A_2}  \sum_{\alpha\in\Gamma_d}\sum_{I \in \mathcal{D}}   \left |
 \widehat{\phi}(I,\alpha) \widehat{\psi}(I,\alpha)\right | \\
 & \le \|\sigma \|_{\infty} [ w]^{\frac{1}{2}}_{A_2} \|\phi  \|_{L^2} \| \psi \|_{L^2}, 
\end{aligned}
\]
and similarly,
\[
\begin{aligned}
| T_3| & \le \|\sigma\|_{\infty}  \sum_{\alpha\in\Gamma_d}\sum_{I \in \mathcal{D}} \left|
\left \langle w^{-\frac{1}{2}} \right \rangle_{E_{\alpha,I}} \langle \psi \rangle_{E_{\alpha,I}} \widehat{\phi}(I,\alpha)
 \widehat{w^{\frac{1}{2}}}(I,\alpha) \right | \\
& \le \| \sigma \|_{\infty} \| \phi\|_{L^2} \left(\sum_{\alpha\in\Gamma_d}
\sum_{I \in \mathcal{D}} 
\left \langle w^{-\frac{1}{2}} \right \rangle^2_{E_{\alpha,I}} 
 \left\vert \widehat{w^{\frac{1}{2}}}(I,\alpha)\right\vert^2 \langle \psi \rangle_{E_{\alpha,I}}^2 \right)^{\frac{1}{2}} \\
& \lesssim \| \sigma \|_{\infty} [w]_{A_2} \| \phi\|_{L^2} \|\psi \|_{L^2},
\end{aligned}
\]
where the last inequality followed via an application of the Carleson
Embedding Theorem using \eqref{SquareFunctionEst2}. 

To estimate $T_1$, we rewrite the term $\left\langle \psi w^{\frac{1}{2}}, h^{\alpha}_I \right\rangle_{L^2}$ 
using disbalanced Haar functions adapted to $w$ as follows:
\begin{eqnarray}
T_1 &= & \sum_{\alpha\in\Gamma_d} \sum_{I \in \mathcal{D}}  \sigma_{I,\alpha} \left \langle w^{-\frac{1}{2}} \right \rangle_{E_{\alpha,I}} \widehat{\phi}(I,\alpha) \widehat{\psi w^{\frac{1}{2}}}(I,\alpha)\label{T_1 for 0100 or 0001}\\
&  = & \sum_{\alpha\in\Gamma_d} \sum_{I \in \mathcal{D}}  \sigma_{I,\alpha} \left \langle 
w^{-\frac{1}{2}} \right \rangle_{E_{\alpha,I}} \widehat{\phi}(I,\alpha)\left \langle 
\psi w^{\frac{1}{2}}, C_I(w,\alpha) h^{w,\alpha}_I + D_I(w,\alpha)h^{1}_{E_{\alpha,I}} \right \rangle_{L^2}  \notag\\
& = & \sum_{\alpha\in\Gamma_d} \sum_{I \in \mathcal{D}}  \sigma_{I,\alpha} C_I(w,\alpha) \left \langle 
w^{-\frac{1}{2}} \right \rangle_{E_{\alpha,I}} \widehat{\phi}(I,\alpha)\left \langle 
\psi w^{\frac{1}{2}}, h^{w,\alpha}_I \right \rangle_{L^2}\notag\\
& & +  \sum_{\alpha\in\Gamma_d} \sum_{I \in
\mathcal{D}}  \sigma_{I,\alpha} D_I(w,\alpha) \left \langle w^{-\frac{1}{2}} \right 
\rangle_{E_{\alpha,I}} \widehat{\phi}(I,\alpha) \left \langle \psi w^{\frac{1}{2}}\right \rangle_{E_{\alpha,I}} \notag\\
& = & S_1 + S_2.\notag
\end{eqnarray}
Now, we show each $|S_j | \lesssim \| \sigma \|_{\infty} [w]_{A_2}
\| \phi\|_{L^2} \| \psi \|_{L^2}$, which will give the estimate for $T_1.$ First, 
consider $S_1$:
\[
\begin{aligned}
\left \vert S_1\right\vert & = \left\vert  \sum_{\alpha\in\Gamma_d} \sum_{I \in \mathcal{D}}  \sigma_{I,\alpha} C_I(w,\alpha) \left \langle 
w^{-\frac{1}{2}} \right \rangle_{E_{\alpha,I}} \widehat{\phi}(I,\alpha)\left \langle 
\psi w^{\frac{1}{2}}, h^{w,\alpha}_I \right \rangle_{L^2} \right\vert \\
& \le \| \sigma\|_{\infty}  \sum_{\alpha\in\Gamma_d} \sum_{I \in \mathcal{D}}  \left \vert
C_I(w,\alpha) \left \langle w^{-\frac{1}{2}} \right \rangle_{E_{\alpha,I}} \widehat{\phi}(I,\alpha)\left
\langle \psi w^{\frac{1}{2}}, h^{w,\alpha}_I \right \rangle_{L^2} \right\vert \\
& \lesssim  \| \sigma \|_{\infty} \| \phi\|_{L^2} \left( \sum_{\alpha\in\Gamma_d} \sum_{I \in \mathcal{D}}
\left \langle w \right \rangle_I \left \langle w^{-\frac{1}{2}} \right \rangle^2_{E_{\alpha,I}} 
\left \langle \psi w^{-\frac{1}{2}}, h^{w,\alpha}_I \right \rangle^2_{L^2(w)} 
\right)^{\frac{1}{2}}\\
& \lesssim  \| \sigma \|_{\infty} \left[ w\right]_{A_2}^{\frac{1}{2}}\| \phi\|_{L^2} \left( \sum_{\alpha\in\Gamma_d} \sum_{I \in \mathcal{D}} \left \langle \psi w^{-\frac{1}{2}}, h^{w,\alpha}_I \right \rangle^2_{L^2(w)} 
\right)^{\frac{1}{2}}\\
& \le  \| \sigma \|_{\infty} [w]^{\frac{1}{2}}_{A_2} \| \phi\|_{L^2} \left\| \psi w^{-\frac{1}{2}} \right\|_{L^2(w)}\\
& =\| \sigma \|_{\infty} [w]^{\frac{1}{2}}_{A_2} \| \phi\|_{L^2} \| \psi \|_{L^2}.
\end{aligned}
\]
Above we used $(\ref{C_Kest})$ coupled with the $A_2$ condition.  Lastly, we estimate $S_2$ as follows:
\[
\begin{aligned}
\left\vert S_2\right\vert & \le  \left\vert \sum_{\alpha\in\Gamma_d} \sum_{I \in
\mathcal{D}}  \sigma_{I,\alpha} D_I(w,\alpha) \left \langle w^{-\frac{1}{2}} \right 
\rangle_{E_{\alpha,I}} \widehat{\phi}(I,\alpha) \left \langle \psi w^{\frac{1}{2}}\right \rangle_{E_{\alpha,I}} \right\vert\\
& \leq \| \sigma \|_{\infty}   \sum_{\alpha\in\Gamma_d} \sum_{I \in \mathcal{D}}  \left \vert D_I(w,\alpha) 
\left \langle w^{-\frac{1}{2}} \right \rangle_{E_{\alpha,I}} \widehat{\phi}(I,\alpha)
\left \langle \psi w^{\frac{1}{2}}\right \rangle_{E_{\alpha,I}} \right \vert \\
& \le \| \sigma \|_{\infty} \| \phi \|_{L^2} \left( \sum_{\alpha\in\Gamma_d}
\sum_{I \in \mathcal{D}}  \frac{ \left\vert \widehat{w}(I,\alpha)\right\vert^2}{ \langle w \rangle_{E_{\alpha,I}}^2}  
\left \langle w^{-\frac{1}{2}} \right \rangle^2_{E_{\alpha,I}} \
\left \langle \psi w^{\frac{1}{2}}  \right \rangle^2_{E_{\alpha,I}} \right)^{\frac{1}{2}} \\
&\lesssim  \| \sigma \|_{\infty} [w]_{A_2} \| \phi \|_{L^2}  \left\| \psi \right\|_{L^2}, 
\end{aligned}
\]
where the third inequality follows via an application of the 
Carleson Embedding Theorem using estimate \eqref{SquareFunctionEst3}.
This establishes that each $|T_j| \lesssim \| \sigma \|_{\infty} [w]_{A_2}
\| \phi\|_{L^2} \| \psi \|_{L^2}$. Since $\phi,\psi \in L^2$ were arbitrary, we can use
duality to conclude
\[
\left \| P^{(0,1)}_{\widehat{w^{ \frac{1}{2}}}}
T_{\sigma} P^{(0,0)}_{\langle w^{- \frac{1}{2}} \rangle} \right \|_{L^2 
\rightarrow L^2} \lesssim \| \sigma \|_{\infty} [w]_{A_2}, 
\]
which finishes the proof of these terms.

\subsubsection{Estimating $P^{(0,1)}_{\widehat{w^{ \frac{1}{2}}}}T_{\sigma}P^{(1,0)}_{\widehat{w^{-\frac{1}{2}}}}$} \label{subsec:VeryDifficult}

To obtain estimates on the final term $P^{(0,1)}_{\widehat{w^{ \frac{1}{2}}}}
T_{\sigma}P^{(1,0)}_{\widehat{w^{- \frac{1}{2}}}}$ we simply use \eqref{canoncialpara} to observe that
\[
\begin{aligned} \left \| P^{(0,1)}_{\widehat{w^{ \frac{1}{2}}}}
T_{\sigma}P^{(1,0)}_{\widehat{w^{- \frac{1}{2}}}} \phi \right \|_{L^2} &\le \left \| M_{w^{\frac{1}{2}}} T_{\sigma} M_{w^{-\frac{1}{2}}} \phi  \right \|_{L^2} 
+  \left \| \mathsf{P}_{\widehat{w^{\frac{1}{2}}}}^{(1,0)}T_{\sigma}\mathsf{P}_{%
\widehat{w^{-\frac{1}{2}}}}^{(0,1)} \phi \right \|_{L^2} +  \left \|
\mathsf{P}_{\widehat{w^{\frac{1}{2}}}}^{(1,0)} T_{\sigma}\mathsf{P}%
_{\langle w^{-\frac{1}{2}}\rangle}^{(0,0)}\phi  \right \|_{L^2} \\
& \ \ + \left \| \mathsf{P}_{\langle w^{\frac{1}{2}}\rangle}^{(0,0)}T_{\sigma}\mathsf{P}_{%
\widehat{w^{-\frac{1}{2}}}}^{(0,1)} \phi  \right \|_{L^2} + 
\left \| \mathsf{P}_{\langle w^{\frac{1}{2}}\rangle}^{(0,0)}T_{\sigma}\mathsf{P}%
_{\langle w^{-\frac{1}{2}}\rangle}^{(0,0)} \phi  \right \|_{L^2}
+ \left \| \mathsf{P}_{\widehat{w^{\frac{1}{2}}}}^{(0,1)}T_{\sigma}\mathsf{P}_{%
\widehat{w^{-\frac{1}{2}}}}^{(0,1)} \phi  \right \|_{L^2} \\
& \ \ + \left \| \mathsf{P}_{\widehat{w^{\frac{1}{2}}}}^{(0,1)}T_{\sigma}\mathsf{P}_{\langle
w^{-\frac{1}{2}}\rangle}^{(0,0)} \phi   \right \|_{L^2}
+ \left \| \mathsf{P}_{\widehat{w^{\frac{1}{2}}}}^{(1,0)}T_{\sigma}\mathsf{P}_{%
\widehat{w^{-\frac{1}{2}}}}^{(1,0)} \phi \right \|_{L^2}
+ \left \| \mathsf{P}_{\langle w^{\frac{1}{2}}\rangle}^{(0,0)}T_{\sigma}\mathsf{P}_{%
\widehat{w^{-\frac{1}{2}}}}^{(1,0)} \phi \right \|_{L^2} \\
& \lesssim \left[ w \right]_{A_2} \| \phi \|_{L^2},
\end{aligned}
\]
using Theorem \ref{thm:haarbd} and our previous computations. This proof strategy motivates the open question:
\begin{question} Is there a proof of the linear bound for the final paraproduct composition
$P^{(0,1)}_{\widehat{w^{ \frac{1}{2}}}}T_{\sigma}P^{(1,0)}_{\widehat{w^{-\frac{1}{2}}}}$
that does not rely on the linear bound of $T_{\sigma}$ on $L^2(w)$?
\end{question}
To be precise, fix $\phi, \psi \in L^2.$  Then, the term of interest is
\[ 
\begin{aligned}
\left \langle P^{(0,1)}_{\widehat{w^{ \frac{1}{2}}}} T_{\sigma}
P^{(1,0)}_{\widehat{w^{- \frac{1}{2}}}} \phi, \psi \right \rangle_{L^2}
& = \left \langle P^{(0,1)}_{\widehat{w^{ \frac{1}{2}}}} 
T_{\sigma} \left( \sum_{\alpha\in\Gamma_d}\sum_{I \in \mathcal{D}} \widehat{w^{
-\frac{1}{2}}}(I,\alpha)\, \widehat{\phi}(I,\alpha) h^1_{E_{\alpha,I}}\right), \psi \right \rangle_{L^2}\\
&=  \sum_{\alpha,\beta\in\Gamma_d}\sum_{I,J \in \mathcal{D}} \widehat{w^{-\frac{1}{2}}}(I,\alpha) \,\widehat{\phi}(I,\alpha) \,\widehat{w^{ \frac{1}{2}}}(J,\beta)\, \widehat{\psi}(J,\beta) 
\left \langle T_{\sigma}h^1_{E_{\alpha, I}}, h^1_{E_{\beta, J}} \right \rangle_{L^2}\\
& = \sum_{\alpha,\beta,\gamma\in\Gamma_d}\sum_{I,J,K \in \mathcal{D}} \widehat{w^{-\frac{1}{2}}}(I,\alpha) \,\widehat{\phi}(I,\alpha) \,\widehat{w^{ \frac{1}{2}}}(J,\beta)\, \widehat{\psi}(J,\beta) \sigma_{K,\gamma} \, \widehat{h_{E_{\alpha, I}}^1}(K,\gamma) \widehat{h_{E_{\beta, J}}^1}(K,\gamma)\\
& = \sum_{\gamma\in\Gamma_d} \sum_{K \in \mathcal{D}} \sigma_{K, \gamma} \left(\sum_{\alpha \in \Gamma_d} \sum_{I \in \mathcal{D}} \widehat{w^{-\frac{1}{2}}}(I,\alpha) \,\widehat{\phi}(I,\alpha) \widehat{h_{E_{\alpha, I}}^1}(K,\gamma)  \right)\\
& \qquad \qquad \times \left( \sum_{\beta\in\Gamma_d}\sum_{J\in\mathcal{D}} \widehat{w^{ \frac{1}{2}}}(J,\beta)\, \widehat{\psi}(J,\beta) \widehat{h_{E_{\beta, J}}^1}(K,\gamma)\right).
\end{aligned}
\]
Currently, our tools seem unequal to the task of bounding this term without recourse to 
the bound for $T_{\sigma}.$ However, given such arguments, one would also obtain a new proof of the linear bound for $T_{\sigma}$ on $L^2(w)$. 

\begin{bibdiv}
\begin{biblist}



\bib{C}{article}{
   author={Chung, Daewon},
   title={Weighted inequalities for multivariable dyadic paraproducts},
   journal={Publ. Mat.},
   volume={55},
   date={2011},
   number={2},
   pages={475--499}
}

\bib{CUMP}{article}{
   author={Cruz-Uribe, David},
   author={Martell, Jos{\'e} Mar{\'{\i}}a},
   author={P{\'e}rez, Carlos},
   title={Sharp weighted estimates for classical operators},
   journal={Adv. Math.},
   volume={229},
   date={2012},
   number={1},
   pages={408--441}
}






\bib{ntv08}{article}{
   author={Nazarov, F.},
   author={Treil, S.},
   author={Volberg, A.},
   title={Two weight inequalities for individual Haar multipliers and other
   well localized operators},
   journal={Math. Res. Lett.},
   volume={15},
   date={2008},
   number={3},
   pages={583--597}
}

\bib{ntv99}{article}{
   author={Nazarov, F.},
   author={Treil, S.},
   author={Volberg, A.},
   title={The Bellman functions and two-weight inequalities for Haar
   multipliers},
   journal={J. Amer. Math. Soc.},
   volume={12},
   date={1999},
   number={4},
   pages={909--928}
}




\bib{PetermichlPott}{article}{
   author={Petermichl, S.},
   author={Pott, S.},
   title={An estimate for weighted Hilbert transform via square functions},
   journal={Trans. Amer. Math. Soc.},
   volume={354},
   date={2002},
   number={4},
   pages={1699--1703 (electronic)}
}


\bib{PRSW}{article}{
   author={Pott, S.},
   author={Reguera, M. C.},
   author={Sawyer, E. T.},
   author={Wick, B. D.},
   title={The Linear Bound for the Natural Weighted Resolution of the Haar Shift},
   date={2013},
   eprint={http://arxiv.org/abs/1308.5349}
}

\bib{SSUT}{article}{
   author={Sawyer, E.},
   author={Shen, C.-Y.},
   author={Uriarte-Tuero, I.},
   title={The two weight theorem for the vector of Riesz transforms: An Expanded version},
   eprint={http://arxiv.org/abs/1302.5093v3}
}


\bib{MR1771755}{article}{
   author={Hukovic, S.},
   author={Treil, S.},
   author={Volberg, A.},
   title={The Bellman functions and sharp weighted inequalities for square
   functions},
   conference={
      title={Complex analysis, operators, and related topics},
   },
   book={
      series={Oper. Theory Adv. Appl.},
      volume={113},
      publisher={Birkh\"auser, Basel},
   },
   date={2000},
   pages={97--113}
}

\bib{W}{article}{
author={Wilson, M.},
   title={Paraproducts and the exponential-square class},
   journal={J. Math Anal. Appl.},
   volume={271},
   date={2002},
   number={12},
   pages={374--382}
}

\bib{wit00}{article}{
   author={Wittwer, J.},
   title={A sharp estimate on the norm of the martingale transform},
   journal={Math. Res. Lett.},
   volume={7},
   date={2000},
   number={1},
   pages={1--12}
}

\end{biblist}
\end{bibdiv}

\end{document}